\documentclass[ECP]{ejpecp} 

\usepackage{color}

\newcommand{\lsup}[1]{\underset{#1\to\infty}{\overline{\lim}}}

\definecolor{darkgreen}{rgb}{0,0.5,0}

\SHORTTITLE{Spectral Spin Glass Dynamics}

\TITLE{An Autonomous Spectral Representation for Fully Asymmetric Spherical Sherrington–Kirkpatrick Dynamics}

\AUTHORS{%
  James~MacLaurin\footnote{New Jersey Institute of Technology United States of America.
    \EMAIL{maclauri@njit.edu}}
  \and 
  Pedro~Vilanova\footnote{Stevens Institute of Technology,
    USA. \BEMAIL{pedro.vilanova@gmail.com} }}

\KEYWORDS{Spin Glass, Asymmetric, Spectral, Sherrington-Kirkpatrick, Neural Networks} 

\AMSSUBJ{NA} 

\SUBMITTED{July 1 2025} 
\ACCEPTED{December 13, 2025} 

\VOLUME{0}
\YEAR{2025}
\PAPERNUM{0}
\DOI{10.1214/YY-TN}

\ABSTRACT{
We study the large-system dynamics of a fully asymmetric spherical
Sherrington--Kirkpatrick model, allowing for a distinguished random signal
that interacts with the evolving spin configuration.  We first derive the
large-$N$ dynamical mean-field limit and characterize the law of a typical
coordinate by an effective Gaussian process whose covariance must satisfy a
self-consistency relation.  The resulting formulation is intrinsically
nonlocal in time and is not, in its usual form, a closed finite-dimensional
dynamical system.

Our main contribution is an autonomous spectral representation of this
limiting dynamics.  We introduce an auxiliary stationary chain of Gaussian
replicas whose one-step marginal reproduces the effective field appearing in
the dynamical mean-field equation.  Applying a matrix-valued Herglotz
representation, or equivalently Fourier analysis in the replica index,
converts the covariance structure of the chain into a family of
finite-dimensional matrix-valued evolution equations indexed by the unit
circle.  The macroscopic observables, including the signal overlap, are then
recovered from explicit spectral integrals.  This representation replaces the
original two-time self-consistency problem by an autonomous spectral closure
and provides a convenient framework for both analysis and numerical
computation.  We prove the validity of the representation and compare its
predictions with direct simulations of the finite-dimensional system.
 }

\begin{document}


\section{Introduction}

High-dimensional systems with quenched random interactions arise in several areas of probability, statistical physics, theoretical neuroscience, and mathematical ecology.  The classical Sherrington--Kirkpatrick (SK) model introduced a fully connected mean-field spin glass with random couplings \cite{SherringtonKirkpatrick1975}, while the spherical spin-glass model replaced the discrete spin constraint by a fixed-radius sphere and thereby produced a particularly tractable setting for dynamical questions \cite{KosterlitzThoulessJones1976}.  A parallel line of work, beginning with May's random-matrix model for ecosystem stability \cite{May1972}, emphasized that large random interaction matrices also arise naturally outside equilibrium statistical mechanics.  In neural-network models, the corresponding random couplings are typically directed rather than symmetric; their spectrum and their nonlinear dynamics have become central objects of study \cite{RajanAbbott2006,Sompolinsky1988}.  Related random-matrix stability questions continue to play an important role in ecology \cite{AllesinaTang2012}.

The distinction between symmetric and asymmetric couplings is substantial.  In the classical SK model the symmetry of the interaction matrix is tied to a Hamiltonian and to equilibrium methods.  When $J^{jk}$ and $J^{kj}$ are independent, as in the model considered here, the drift is no longer generated by a scalar Hamiltonian and there is in general no detailed-balance description.  This fully asymmetric regime was already analyzed for spherical Langevin systems by Crisanti and Sompolinsky \cite{CrisantiSompolinsky1987}, and it is closely related to the random recurrent-network model of Sompolinsky, Crisanti, and Sommers \cite{Sompolinsky1988}.  The latter work identified a transition from a stable regime to chaotic dynamics by deriving a self-consistent equation for the temporal autocorrelation.  We use the term \emph{asymmetric SK dynamics} for this SK-type mean-field Gaussian interaction structure without the reciprocity constraint $J^{jk}=J^{kj}$.

We study an $N$-dimensional spherical diffusion driven both by such an asymmetric random interaction matrix and by a random signal $X\in\mathbb R^N$.  For constants $\beta,q>0$, the dynamics are
\begin{multline}
 dZ^j_t = \bigg( qX^j +  \beta G^j_t - \beta N^{-1} Z^j_t\sum_{k=1}^N Z^k_t G^k_t
 - q N^{-1} Z^j_t \sum_{k=1}^N Z^k_t X^k
 - \sigma^2_t \frac{N-1}{2N} Z^j_t \bigg) dt \\
 + \sigma_t dW^j_t - N^{-1}\sigma_t Z^j_t \sum_{k=1}^N Z^k_t dW^k_t ,
 \label{eq: main dynamics intro}
\end{multline}
where
\begin{align}
 G^j_t = N^{-1/2}\sum_{k=1}^N J^{jk} Z^k_t.
\end{align}
The variables $\{J^{jk}\}_{1\leq j,k\leq N}$ are iid $\mathcal N(0,1)$, in particular with $J^{jk}$ independent of $J^{kj}$, and $(\sigma_t)_{t\geq0}$ is deterministic and continuous.  The signal $X$ is uniform on
\[
 \mathcal S^N:=\left\{x\in\mathbb R^N:\sum_{j=1}^N (x^j)^2=N\right\},
\]
and is independent of both the couplings and the Brownian motions.  The two projection terms in \eqref{eq: main dynamics intro} remove the radial components of the interaction and signal drifts, while the final two stochastic terms give the projected Brownian motion together with its It\^o correction.  Consequently, It\^o's formula yields
\begin{align}
 \sum_{j=1}^N (Z^j_t)^2 = \sum_{j=1}^N (Z^j_0)^2
\end{align}
for every $t\geq0$.  Thus the dynamics remain on the sphere once initialized there.

\subsection*{Relation to previous work}

The standard analytical description of large disordered dynamical systems is a dynamical mean-field theory: after averaging over the disorder, a typical coordinate is replaced by an effective stochastic process whose Gaussian forcing has a covariance that must be determined self-consistently from the law of that same process.  This point of view underlies the classical analyses of asymmetric spin systems and random neural networks \cite{CrisantiSompolinsky1987,Sompolinsky1988}; path-integral and generating-functional formulations provide a systematic language for the same reduction, and are reviewed for stochastic disordered systems in \cite{HertzRoudiSollich2017}.  The numerical scheme of Eissfeller and Opper \cite{eissfeller1992new} is an important early method for solving such self-consistent effective dynamics without simulating the original large system directly.

For spherical spin glasses with symmetric or partially symmetric interactions, two-time correlation and response functions lead to the Cugliandolo--Kurchan dynamical equations and to the study of aging and nonequilibrium relaxation \cite{CugliandoloKurchan1993}.  Rigorous work began with the large-deviation approach of Ben Arous and Guionnet \cite{BenArous1995}, which identifies the limiting path-space empirical measure through a self-consistent variational description.  The Cugliandolo--Kurchan equations were subsequently placed on a rigorous footing for mean-field spin-glass dynamics in \cite{BenArousDemboGuionnet2006}.  More recently, the universality of Langevin spin-glass dynamics have been determined by Dembo, Lubetzky, and Zeitouni \cite{Dembo2019}.  Related large-deviation and mean-field limits for randomly connected neural systems are developed, for example, in \cite{Faugeras2015,Cabana2018}, while broader treatments of the neural-network dynamical mean-field formalism may be found in \cite{Helias2020}.  These works make clear that the effective single-site description is powerful, but they also illustrate a recurring difficulty: the self-consistency is naturally expressed through two-time covariance and response kernels, so the resulting equations are typically nonlocal in time or otherwise awkward to analyze directly.

The signal term $qX$ places the model at the intersection of disordered dynamics and high-dimensional inference.  In planted models, one follows an overlap with a distinguished random direction and asks whether the dynamics amplify the initial correlation with that direction.  Closely related questions arise in the analysis of Langevin dynamics and first-order algorithms for tensor principal-component analysis and other mean-field inference problems \cite{Arous2020,Alaoui2020a}.  In the present model the observable
\[
 N^{-1}\sum_{j=1}^N Z_t^jX^j
\]
plays precisely this role.  The spherical constraint couples this overlap to the random interaction field, so a useful large-$N$ description must keep track simultaneously of signal alignment and interaction-induced correlations.

There are also several complementary strands of work on random networks that motivate the present setting.  In theoretical neuroscience, random asymmetric couplings produce irregular and chaotic activity, with the autocorrelation function serving as a principal macroscopic observable \cite{Sompolinsky1988,Cessac1994,Brunel2000,LerUrs06}. There is considerable interest in understanding how random recurrent neuronal networks can support low-dimensional structures \cite{mastrogiuseppe2018linking,huang2019circuit,clark2023dimension}.  Random-matrix structure is closely connected to the onset and organization of such dynamics \cite{RajanAbbott2006}.  In ecology, May's stability criterion and its later refinements likewise relate the spectrum and symmetry structure of large interaction matrices to collective stability \cite{May1972,AllesinaTang2012}.  The common mathematical theme is that microscopic randomness is high-dimensional, whereas the quantities of interest are low-dimensional self-consistent observables extracted from the limiting process.

\subsection*{Contribution of this paper}

Our first step is to identify the large-$N$ limit of the joint empirical law of the signal, the spin trajectory, and the random interaction field.  The limiting law is Gaussian and is characterized by a self-consistent effective stochastic equation.  This is in the spirit of the classical dynamical mean-field description and of the rigorous path-space limits above.  Since we have not found a proof in the literature for the precise spherical, fully asymmetric, signal-driven system \eqref{eq: main dynamics intro}, we include a proof tailored to this model.  The proof is based on a discretized comparison argument and also records the estimates needed later for the spectral representation.

The main new ingredient is a spectral reformulation of the limiting covariance.  The usual mean-field description is expressed through the two-time covariance of the effective Gaussian field, and this representation is not autonomous in a small collection of one-time observables.  We introduce instead an infinite stationary chain of replicas, in which each replica is driven by the preceding one together with independent white noise.  Translation invariance in the replica index makes the covariance kernel positive definite on $\mathbb Z$.  Herglotz's theorem then diagonalizes the shift in the replica coordinate and replaces the infinite covariance sequence by a spectral parameter $\theta\in\mathbb S^1$.  Frequency by frequency, the limiting dynamics become linear Gaussian equations whose covariance matrices satisfy a closed family of finite-dimensional ODEs, coupled only through explicit integrals over $\theta$.

This spectral variable should not be confused with an eigenvalue of the finite-$N$ coupling matrix $J$.  Indeed, a fully asymmetric Gaussian matrix is nonnormal, and the argument does not rely on diagonalizing $J$.  The spectral representation instead arises from the stationary replica shift.  This distinction is useful both conceptually and technically: the construction converts a nonlocal covariance problem into an autonomous evolution for a deterministic field of $4\times4$ covariance matrices.  In particular, the two scalar quantities that feed back into the limiting SDE,
\[
 \mathbb E[z_tg_t]
 \qquad\text{and}\qquad
 \mathbb E[z_tx],
\]
are recovered as explicit spectral integrals.  The resulting system is well suited to numerical integration and gives direct access to the distribution of covariance mass over replica frequencies.

The paper is organized as follows.  Section 2 states the limiting Gaussian law and the closed spectral covariance dynamics.  Section 3 discusses numerical implementation and compares the spectral prediction with finite-$N$ simulations.  Section 4 proves the large-$N$ limit, including the discretization and comparison estimates.  Section 5 constructs the stationary replica chain and proves the Herglotz representation that yields the autonomous spectral system.

\textit{Notation:} 

We let $\mathbf{1}: \mathbb{R} \to \mathbb{R}$ be the indicator function, i.e. $\mathbf{1}(x) = 1$ if $x=1$, else otherwise it equals $0$.
 
 \section{Outline of Model and Main Results}
 
We study $N$-dimensional spherical dynamics driven by a random signal $X \in \mathbb{R}^N$, i.e. for constants $q, \beta > 0$,
 \begin{multline}
 dZ^j_t = \bigg( qX^j +  \beta G^j_t - \beta N^{-1} Z^j_t\sum_{k=1}^N Z^k_t G^k_t  -  q N^{-1} Z^j_t \sum_{k=1}^N Z^k_t X^k- \sigma^2_t \frac{N-1}{2N} Z^j_t \bigg) dt \\+ \sigma_t dW^j_t - \frac{\sigma_t}{N}Z^j_t \sum_{k=1}^N Z^k_t dW^k_t , \label{eq: main dynamics}
 \end{multline}
 where
 \begin{align}
 G^j_t = N^{-1/2}\sum_{k=1}^N J^{jk} Z^k_t,
 \end{align}
 and $\lbrace J^{jk} \rbrace_{1\leq j,k \leq N}$ are iid $\mathcal{N}(0,1)$ Gaussian random variables, with $J^{jk} \neq J^{kj}$, and $(\sigma_t)_{t\geq 0}$ is deterministic, with $t\mapsto\sigma_t$ continuous. $X$ is randomly selected according to the uniform distribution over $\mathcal{S}^N$. It is known that $X$ may be easily sampled as follows. Select $\lbrace \tilde{X}^j \rbrace_{j\in I_N}$ to be independent Gaussian $\mathcal{N}(0,1)$, and then define
 \begin{align}
 X^j :=\sqrt{N} \tilde{X}^j \big( \sum_{k\in I_N} (\tilde{X}^k)^2 \big)^{-1/2}.
 \end{align}
We take the initial condition to have a specified level of overlap with the random signal. Accordingly, we define the initial condition to be such that 
\begin{equation}
Z^j_0 := \sqrt{N} \big(\tilde{q} X^j +\sqrt{1-\tilde{q}^2} \tilde{Z}^j_0 \big) \bigg\lbrace \sum_{k=1}^N \big(\tilde{q} X^k +\sqrt{1-\tilde{q}^2} \tilde{Z}^k_0 \big)^2 \bigg\rbrace^{-1/2}.
\end{equation}
 Here $\tilde{q} \in [0,1]$ is a constant,  $(\tilde{Z}^j_0)_{j\in I_N}$ are iid $\mathcal{N}(0,1)$, and $\mathbb{E}[\tilde{Z}^j_0 X^k] = 0$. Notice that
 \begin{align}
 \sum_{j \in I_N} (Z^j_0)^2 = N.
 \end{align}
 Consider the triple empirical measure
 \begin{align}
 \hat{\mu}^N_T = N^{-1}\sum_{j=1}^N \delta_{X^j , Z^j_{[0,T]} , G^j_{[0,T]}} \in \mathcal{P}\big( \mathbb{R} \times \mathcal{C}([0,T],\mathbb{R}^2) \big).
 \end{align}
 The following theorem characterizing the large $N$ limit of asymmetric spin glass dynamics is generally known \cite{Helias2020}. Recently a proof was provided in \cite{MacLaurin2024} in the case that $q = 0$ and the dynamics is not spherical. Nevertheless, because there does not seem to exist a proof for the exact system considered in this paper, we provide a quick proof in Section \ref{sec:proof}.
 \begin{theorem} \label{Theorem Main Limit}
 There exists a centered Gaussian probability law $\xi$ such that, with unit probability, $\hat{\mu}^N_T \to \xi$. Here $\xi$ is the unique law of random variables $\big( x , z_{[0,T]} ,g_{[0,T]}\big)$, with the following covariance structure. Let $(x , y , W_{[0,T]} , g_{[0,T]})$ be independent centered Gaussian variables: with $x\sim \mathcal{N}(0,1)$, $y\sim \mathcal{N}(0,1-\tilde{q}^2)$ and $W_{[0,T]}$ a Brownian Motion. Conditionally on the variables 
 $(x , y , W_{[0,T]} , g_{[0,T]})$, define $z \in \mathcal{C}([0,T],\mathbb{R})$ to be the strong solution to the following stochastic differential equation,
 \begin{align}
 dz_t = \bigg(qx - q z_t\mathbb{E}^{\xi}[z_t x] + \beta g_t - \beta z_t \mathbb{E}^{\xi}[ z_t g_t] - \frac{\sigma_t^2}{2} z_t \bigg)dt + \sigma_t dW_t,
 \end{align}
with $z_0 = \tilde{q}x + y$. The process $g_{[0,T]}$ is centered and Gaussian, with covariance structure
 \begin{align}
 \mathbb{E}[ g_s g_t ] = \mathbb{E}[z_s z_t].
 \end{align}
 \end{theorem}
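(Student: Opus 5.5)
We prove the statement by a discretized comparison argument, organized into four steps.

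\textbf{Step 1: existence and uniqueness of $\xi$.} Work on $[0,T]$ with the unknowns $K(s,t)=\mathbb{E}^{\xi}[z_sz_t]$, $a_t=\mathbb{E}^{\xi}[z_tx]$ and $b_t=\mathbb{E}^{\xi}[z_tg_t]$. For continuous $(a,b)$ and a given continuous covariance kernel $C$ for $g$, the SDE for $z$ is linear in $z$ with Gaussian inputs. Hence $z$ is Gaussian, and it is given by the variation-of-constants formula
\[
z_t=\Phi_{0,t}z_0+\int_0^t\Phi_{s,t}\big(qx+\beta g_s\big)ds+\int_0^t\Phi_{s,t}\sigma_s dW_s,
\]
where $\Phi_{s,t}=\exp(-\int_s^t(qa_u+\beta b_u+\sigma_u^2/2)du)$. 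Then $(K,a,b)$ become explicit functionals of $(C,a,b)$. We impose the fixed point $C=K$.

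Uniqueness and existence follow from a Picard iteration in the norm $\sup_{s,t\le \tau}|\cdot|$, weighted by $e^{-\lambda t}$. The map is Lipschitz because $\Phi$ and all second moments are bounded on $[0,T]$, and the term $\int\Phi g$ enters $K$ only through a time integral. Contraction therefore holds for $\lambda$ large, using the same Gronwall-type estimate as for Volterra equations.

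Two facts must also be checked. First, the fixed point satisfies $\mathbb{E}[z_t^2]=1$: applying Itô to $z_t^2$ and using the definitions of $a,b$ shows that $m_t=\mathbb{E}z_t^2$ solves $\dot m=(2qa+2\beta b+\sigma^2)(1-m)$ with $m_0=1$. Second, $K$ is positive semidefinite, so $g$ exists.

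\textbf{Step 2: time discretization and Gaussian conditioning.} Fix a mesh $t_i=i\Delta$. The key structural observation, in the spirit of Ben Arous–Guionnet and Eissfeller–Opper, is the conditional law of $G_{t_i}$. Conditionally on the past spins $Z_{t_0},\dots,Z_{t_i}$, the vector $G_{t_i}=N^{-1/2}JZ_{t_i}$ is Gaussian, because $J$ has iid rows. Each row $J^{j\cdot}$ is independent across $j$, and conditioning on $\{J^{j\cdot}\cdot Z_{t_k}\}_{k<i}$ yields Gaussian regression onto the past fields with covariance given by the empirical overlaps $N^{-1}\langle Z_{t_k},Z_{t_l}\rangle$.

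There is a complication: $Z$ depends on $J$ itself, and in particular on the column $J^{\cdot j}$ through $G^k$ for $k\ne j$. I would handle this by comparing the Euler scheme with a scheme in which the fields are generated exactly by this conditional Gaussian recursion with empirical covariances. Because of full asymmetry, the feedback from $J^{kj}$ into $Z^j$ is $O(N^{-1/2})$ per site, and it averages out in empirical quantities. This is the Bolthausen-type conditioning used in AMP analyses, and here it is simpler since there is no Onsager term: asymmetry kills the response correction.

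\textbf{Step 3: concentration and identification of the limit.} In the comparison scheme, the triples $(X^j,Z^j,G^j)$ are approximately iid given the empirical overlaps. A law of large numbers gives the result that the overlaps $N^{-1}\langle Z_{t_k},Z_{t_l}\rangle$, $N^{-1}\langle Z,X\rangle$ and $N^{-1}\langle Z,G\rangle$ converge almost surely to the discretized $K,a,b$. This uses Gaussian concentration with exponential tails and Borel–Cantelli, since everything lives on the sphere and the moments are bounded.

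The spherical normalizations of $X$ and $Z_0$ produce $x\sim\mathcal N(0,1)$ and $z_0=\tilde qx+y$. The projection terms become $z_t\mathbb E[z_tx]$ and $z_t\mathbb E[z_tg_t]$. The martingale projection $N^{-1}Z^j\sum_k Z^kdW^k$ is $O(N^{-1/2})$ per coordinate, and the correction $(N-1)/(2N)\to 1/2$.

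\textbf{Step 4: removing the discretization.} Let $\Delta\to0$. This requires uniform-in-$N$ estimates bounding the difference between the continuous and Euler dynamics by $C\sqrt\Delta$ in Wasserstein distance on path space. These follow from the facts that $\|Z_t\|^2=N$ and that $\|J\|_{op}\le 3\sqrt N$ almost surely eventually, which give Lipschitz drift. Combining the Wasserstein estimates with Step 1 continuity yields $\hat\mu^N_T\to\xi$ almost surely.

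\textbf{Main obstacle.} I expect the hardest step to be Step 2, namely controlling the dependence of $Z$ on $J$ so that $G$ is asymptotically Gaussian with covariance $K$ and, crucially, asymptotically independent of $(x,y,W)$ with no response term.
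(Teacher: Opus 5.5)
Your proposal follows essentially the same route as the paper. Both arguments run through the same stages: well-posedness of $\xi$ via the variation-of-constants/Volterra fixed point, with the identity $\mathbb E[z_t^2]=1$ supplying the a priori bounds; an Euler scheme whose fields are treated by row-wise Gaussian conditioning, with no Onsager term because of full asymmetry, together with inductive exponential concentration; and a coupling of the Euler scheme to the spherical diffusion on the event $\|\mathcal J_N\|_{\mathrm{op}}\le L$, closed by Borel--Cantelli. One simplification is available: for the Euler scheme itself the conditional Gaussian law of $g_{a+1}$ given the history is exact, since the history reveals $J$ only through the linear constraints $\mathcal J_N y_b=g_b$, $b\le a$, so the extra comparison scheme and the heuristic $O(N^{-1/2})$ feedback control in your Step 2 are unnecessary.
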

 Classically, the limiting measure $\xi$ has been studied using delay differential equations \cite{Sompolinsky1988, BenArous1995,ben2018spectral,Helias2020}. The main result of this paper is to determine an autonomous spectral representation of the dynamics. The dynamics is parameterized by $\mathbb{S}^1 = (-\pi,\pi]$.
 
 We now specify the spectral dynamics.  The construction is interpreted frequency by frequency. For each fixed $\theta\in\mathbb S^1$, let $a(\theta),b(\theta)$ be independent $\mathcal N(0,1/2)$ variables and let $w(\theta),\tilde w(\theta)$ be independent Brownian motions, independent of $a(\theta),b(\theta)$. A simultaneous measurable realization for all $\theta$ is not needed; the deterministic covariance field below is the object used in the theorem. Define
\begin{multline}
du_t(\theta)=\bigg(q a(\theta)+\beta\cos(\theta)u_t(\theta)-\beta\sin(\theta)v_t(\theta)-\frac{\sigma_t^2}{2}u_t(\theta)\\
-\frac{\beta}{2\pi}u_t(\theta)\int_{\mathbb S^1}\big(\mathbb E[u_t(\alpha)^2]+\mathbb E[v_t(\alpha)^2]\big)\cos(\alpha)\,d\alpha
-\frac{q}{2\pi}u_t(\theta)\int_{\mathbb S^1}\mathbb E[u_t(\alpha)a(\alpha)+v_t(\alpha)b(\alpha)]\,d\alpha\bigg)dt\\
+2^{-1/2}\sigma_t\,dw_t(\theta),
\label{eq: ut theta dynamics}
\end{multline}
\begin{multline}
dv_t(\theta)=\bigg(q b(\theta)+\beta\sin(\theta)u_t(\theta)+\beta\cos(\theta)v_t(\theta)-\frac{\sigma_t^2}{2}v_t(\theta)\\
-\frac{\beta}{2\pi}v_t(\theta)\int_{\mathbb S^1}\big(\mathbb E[u_t(\alpha)^2]+\mathbb E[v_t(\alpha)^2]\big)\cos(\alpha)\,d\alpha
-\frac{q}{2\pi}v_t(\theta)\int_{\mathbb S^1}\mathbb E[u_t(\alpha)a(\alpha)+v_t(\alpha)b(\alpha)]\,d\alpha\bigg)dt\\
+2^{-1/2}\sigma_t\,d\tilde w_t(\theta).
\label{eq: vt theta dynamics}
\end{multline}
The initial conditions are
\begin{align}
u_0(\theta)&=\tilde q a(\theta)+\hat u_0(\theta),\\
v_0(\theta)&=\tilde q b(\theta)+\hat v_0(\theta),
\end{align}
where, for each fixed $\theta$, $\hat u_0(\theta),\hat v_0(\theta)$ are independent centered Gaussian variables, independent of $a(\theta),b(\theta)$, with
\begin{align}
\mathbb E[\hat u_0(\theta)^2]&=(1-\tilde q^2)/2,\\
\mathbb E[\hat v_0(\theta)^2]&=(1-\tilde q^2)/2,\\
\mathbb E[\hat u_0(\theta)\hat v_0(\theta)]&=0.
\end{align}
Writing $y_t(\theta) = \big( u_t(\theta) , v_t(\theta), a(\theta), b(\theta) \big)^T \in \mathbb{R}^4$, define the covariance matrix
\begin{equation}
\mathcal{K}_t(\theta) = \mathbb{E}\big[ y_t(\theta) y_t(\theta)^T \big] \in \mathbb{R}^{4 \times 4}.
\end{equation}We can determine a self-contained set of ODEs to solve for $\big\lbrace \mathcal{K}_t(\theta) \big\rbrace_{\theta\in\mathbb{S}^1}$ as follows. 
\begin{lemma}
\label{lemma22}
There exists a unique solution $ \mathcal{K}_t \in \mathcal{C}\big([0,T], \mathcal{C}(\mathbb{S}^1) \big)$ to the following system, such that for each $\theta \in \mathbb{S}^1$,
\begin{align} \label{eq: K theta evolution}
\frac{d}{dt}\mathcal{K}_t(\theta)  = B_t(\theta) \mathcal{K}_t(\theta) + \mathcal{K}_t(\theta) B_t(\theta)^T + \frac{1}{2} \left(\begin{array}{c c c c}
\sigma_t^2 & 0 & 0 & 0 \\
0 & \sigma_t^2 & 0 & 0\\
0 & 0 & 0 & 0 \\
0 & 0 & 0 & 0
\end{array}\right)
\end{align}
and
\begin{align}
B_t(\theta) = \left(\begin{array}{c c c c}
- \sigma_t^2 / 2 -  ( \beta A_t +  qC_t) + \beta \cos(\theta) & -\beta \sin(\theta) & q & 0 \\
\beta \sin(\theta) & - \sigma_t^2 /  2 - ( \beta A_t + q C_t) + \beta \cos(\theta) & 0 & q \\
0 & 0 & 0 & 0 \\
0 & 0 & 0 & 0
\end{array}\right).
\end{align}
and
\begin{align}
A_t =& \frac{1}{2\pi} \int_{\mathbb{S}^1} \big( \mathcal{K}_{t,11}(\theta)+\mathcal{K}_{t,22}(\theta) \big)\cos(\theta)\,d\theta, \\
C_t=& \frac{1}{2\pi}\int_{\mathbb{S}^1} \big( \mathcal{K}_{t,13}(\theta) + \mathcal{K}_{t,24}(\theta) \big)\,d\theta .
\end{align}
The initial condition is, for $\theta \in \mathbb{S}^1$,
\begin{align}
\label{eq:init}
\mathcal{K}_0(\theta) = \left(\begin{array}{ c c c c}
1/2 & 0 & \tilde{q}/2 & 0 \\
0 & 1/2 & 0 & \tilde{q}/2  \\
 \tilde{q}/2 & 0 & 1/2 & 0 \\
0 & \tilde{q}/2  & 0 & 1/2
\end{array}\right)\,.
\end{align}
\end{lemma}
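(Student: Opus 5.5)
We treat the system as an ODE in the Banach space $E=\mathcal{C}(\mathbb{S}^1,\mathbb{R}^{4\times4})$ with the sup norm, and argue by a Picard/continuation argument.

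\textbf{Step 1: reduction.} The only nonlocal coupling is through the two scalars $A_t,C_t$. These are bounded linear functionals of $\mathcal{K}_t$:
\[
|A_t|+|C_t|\le 2\|\mathcal{K}_t\|_E .
\]
Write the right-hand side of \eqref{eq: K theta evolution} as $F_t(\mathcal{K})(\theta)$. Then $F_t$ is a polynomial of degree two in $\mathcal{K}$, because $B_t$ is affine in $(A_t,C_t)$ and these are linear in $\mathcal{K}$. The coefficients $\sigma_t^2$, $\cos\theta$ and $\sin\theta$ are continuous in $(t,\theta)$. Hence $F:[0,T]\times E\to E$ is continuous and locally Lipschitz in $\mathcal{K}$, uniformly in $t$. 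It also maps $E$ into $E$: if $\mathcal{K}$ is continuous in $\theta$, so is $F_t(\mathcal{K})$. By Cauchy--Lipschitz in Banach spaces, there is a unique maximal solution in $\mathcal{C}([0,T^*),E)$ starting from the constant initial datum \eqref{eq:init}. Uniqueness on $[0,T]$ follows from the same local Lipschitz property via Gronwall.

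\textbf{Step 2: a priori bounds (the main obstacle).} The quadratic terms $-(\beta A_t+qC_t)\mathcal{K}$ could in principle blow up, so we need $\sup_{t<T^*}\|\mathcal{K}_t\|_E<\infty$ whenever $T^*\le T$.

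First, the last two rows of $B_t$ vanish. Hence the lower-right $2\times2$ block of $\mathcal{K}_t(\theta)$ is constant, equal to $\tfrac12 I$.

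Second, let $\lambda_t=\beta A_t+qC_t$, a scalar that does not depend on $\theta$. Conjugating by the scalar integrating factor $\exp(\int_0^t\lambda_s ds)$ on the $(u,v)$ block makes the equation linear. Equivalently, $\mathcal{K}_t(\theta)$ is the covariance of the linear Gaussian SDE \eqref{eq: ut theta dynamics}--\eqref{eq: vt theta dynamics} with a given scalar $\lambda_t$, so $\mathcal{K}_t(\theta)$ remains positive semidefinite.

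Third, we bound the total mass
\[
m_t=\frac{1}{2\pi}\int\big(\mathcal{K}_{t,11}+\mathcal{K}_{t,22}\big)d\theta .
\]
Its derivative contains $-2\lambda_t m_t$ plus terms bounded by $C(1+m_t)$ and terms from the cross covariances. By Cauchy--Schwarz, $|\mathcal{K}_{13}|\le(\mathcal{K}_{11}/2)^{1/2}$, and $|A_t|\le m_t$. Since the dangerous term $-2\lambda_t m_t$ can have either sign, we expect a conserved quantity instead. We would verify that $m_t\equiv1$, the spectral analogue of the spherical constraint $\mathbb{E}[z_t^2]=1$. Differentiating gives
\[
\dot m_t=2\beta A_t+2qC_t-2\lambda_t m_t-\sigma_t^2 m_t+\sigma_t^2 ,
\]
using that $\int\sin\theta\,(\mathcal{K}_{12}-\mathcal{K}_{21})=0$ by the antisymmetric structure. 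This has the form
\[
\dot m_t=(1-m_t)(2\lambda_t+\sigma_t^2),
\]
and $m_0=1$. Uniqueness for this scalar linear ODE in $m$ therefore forces $m_t\equiv1$ on $[0,T^*)$.

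Then $|A_t|\le1$. By Cauchy--Schwarz, $|C_t|\le \frac{1}{2\pi}\int(\mathcal{K}_{11}+\mathcal{K}_{22})^{1/2}\le1$ as well. So $\lambda_t$ is bounded, and the equation is linear with bounded coefficients. Gronwall then gives $\sup_{t<T^*}\|\mathcal{K}_t\|_E\le C(T)$, so $T^*>T$. If the exact cancellation in $\dot m_t$ fails on computation, the fallback is to bound $m_t$ directly. The term $-2\lambda_t m_t$ is then handled by noting $\lambda_t\ge -\beta m_t - q\,m_t^{1/2}$, which gives a superlinear bound; in that case we would need to show that the positive part of the drift is subquadratic.

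\textbf{Step 3: conclusion.} With the a priori bound, the maximal solution extends to $[0,T]$, giving existence in $\mathcal{C}([0,T],\mathcal{C}(\mathbb{S}^1))$. Uniqueness follows from Step 1.
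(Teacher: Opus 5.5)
Your proposal is correct and follows essentially the same route as the paper: Cauchy--Lipschitz in $\mathcal C(\mathbb S^1;\mathbb R^{4\times4})$, preservation of positive semidefiniteness through the Lyapunov structure, the conserved spectral mass $\dot m_t=(2\beta A_t+2qC_t+\sigma_t^2)(1-m_t)$ with $m_0=1$, and Cauchy--Schwarz bounds $|A_t|,|C_t|\le 1$ that rule out blow-up. The cancellation you were unsure about does hold, since the $\sin\theta$ term vanishes pointwise because $\mathcal K_t(\theta)$ stays symmetric, so your fallback paragraph is unnecessary.
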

In the next theorem, we demonstrate that the spectral system corresponds to the covariance of the limiting Gaussian law $\xi$ of Theorem \ref{Theorem Main Limit}. This result is significant because it provides an autonomous description of out-of-equilibrium dynamics in high dimensional recurrent neuronal networks.
\begin{theorem} \label{Theorem Herglotz}
For all $t\geq 0$,
\begin{align}
\mathbb{E}^{\xi}[ z_t g_t] &= A_t, \\
\mathbb{E}^{\xi}[z_t x] &= C_t .
\end{align}
\end{theorem}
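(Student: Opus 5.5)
The plan is to realize the limiting law $\xi$ of Theorem \ref{Theorem Main Limit} as the one-site marginal of a stationary chain of replicas indexed by $n\in\mathbb Z$, and then Fourier transform in the replica index.

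\textbf{Step 1: the replica chain.} Fix deterministic continuous functions $(A_t,C_t)$, to be chosen later as the functions from Lemma \ref{lemma22}. Take independent families $(x^{(n)},y^{(n)},W^{(n)})_{n\in\mathbb Z}$, each distributed as $(x,y,W)$ in Theorem \ref{Theorem Main Limit}. Consider the linear system
\begin{align*}
dz^{(n)}_t=\Big(qx^{(n)}-qC_tz^{(n)}_t+\beta z^{(n-1)}_t-\beta A_tz^{(n)}_t-\tfrac{\sigma_t^2}{2}z^{(n)}_t\Big)dt+\sigma_t\,dW^{(n)}_t,
\end{align*}
with $z^{(n)}_0=\tilde q x^{(n)}+y^{(n)}$. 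This system is triangular in the shift, so it can be solved by a Neumann-type series in the shift operator. The series converges in $L^2$ on $[0,T]$ because the kernel $e^{\beta t}$-bounds of $\sum_k \beta^k t^k/k!$ are summable. The resulting Gaussian chain is shift-stationary.

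The driving field $g^{(n)}:=z^{(n-1)}$ is a functional of the replicas with index at most $n-1$. Hence it is independent of $(x^{(n)},y^{(n)},W^{(n)})$. By stationarity, $\mathbb E[g^{(n)}_sg^{(n)}_t]=\mathbb E[z^{(n)}_sz^{(n)}_t]$. Therefore, if $(A_t,C_t)$ satisfy
\begin{align*}
A_t=\mathbb E[z^{(n)}_tz^{(n-1)}_t],\qquad C_t=\mathbb E[z^{(n)}_tx^{(n)}],
\end{align*}
the law of $(x^{(n)},z^{(n)},g^{(n)})$ solves the fixed-point problem of Theorem \ref{Theorem Main Limit}. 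By the uniqueness asserted there, this law equals $\xi$, and consequently $\mathbb E^\xi[z_tg_t]=A_t$ and $\mathbb E^\xi[z_tx]=C_t$.

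\textbf{Step 2: covariance lattice equations.} It therefore suffices to show that the choice of $(A_t,C_t)$ from Lemma \ref{lemma22} satisfies the self-consistency above. Define the stationary covariance sequences
\begin{align*}
R^{zz}_t(k)=\mathbb E[z^{(n+k)}_tz^{(n)}_t],\quad R^{zx}_t(k)=\mathbb E[z^{(n+k)}_tx^{(n)}],\quad R^{xx}(k)=\mathbb 1_{k=0}.
\end{align*}
Applying It\^o's formula to the products $z^{(n+k)}_tz^{(n)}_t$ and $z^{(n+k)}_tx^{(n)}$ gives a closed linear system of ODEs on $\ell^1(\mathbb Z)$-valued sequences. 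In this system the shift enters through the terms $R(k\pm1)$, the noise contributes $\sigma_t^2\mathbb 1_{k=0}$, and the initial data are $R^{zz}_0(k)=\mathbb 1_{k=0}$ and $R^{zx}_0(k)=\tilde q\,\mathbb 1_{k=0}$. For given continuous $(A,C)$ this system has a bounded generator on $\ell^1$, so it has a unique solution. By Herglotz's theorem (matrix version), the stationary positive-definite sequence $(R_t(k))_k$ has a spectral density.

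\textbf{Step 3: identification with $\mathcal K_t(\theta)$.} Write $e^{-i\theta}=\cos\theta-i\sin\theta$ and pass to real and imaginary parts $(u,v)$ and $(a,b)$. Under this change of variables, multiplication by $\beta e^{-i\theta}$ becomes the rotation block in $B_t(\theta)$. The flat initial spectrum (variance $1/2$ per real component, cross term $\tilde q/2$) matches \eqref{eq:init}, and the white-noise term becomes $\tfrac12\sigma_t^2$ on the $(u,v)$ block. One then checks directly that the sequences
\begin{align*}
\tilde R^{zz}_t(k)=\frac1{2\pi}\int_{\mathbb S^1}e^{ik\theta}\big(\mathcal K_{t,11}+\mathcal K_{t,22}\big)(\theta)\,d\theta,\qquad \tilde R^{zx}_t(k)=\frac1{2\pi}\int_{\mathbb S^1}e^{ik\theta}\big(\mathcal K_{t,13}+\mathcal K_{t,24}\big)(\theta)\,d\theta,
\end{align*}
obtained from the solution of Lemma \ref{lemma22} (with the antisymmetric parts $\mathcal K_{12},\mathcal K_{14},\ldots$ supplying the sine components) satisfy the same lattice ODE with the same data. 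By uniqueness in Step 2, they coincide with $R_t$. Evaluating at $k=1$ and $k=0$ gives
\begin{align*}
R^{zz}_t(1)=A_t,\qquad R^{zx}_t(0)=C_t,
\end{align*}
which is exactly the self-consistency required in Step 1. This closes the argument.

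The main obstacle I expect is the bookkeeping in Step 3. One must track how the real $4\times4$ block structure, including the skew entries generated by $\sin\theta$, reproduces both $R(k)$ and the asymmetric cross-covariances $\mathbb E[z^{(n+k)}z^{(n)}]\neq\mathbb E[z^{(n)}z^{(n+k)}]$ when they are taken at different coordinates. I would first verify the correspondence on the complex scalar density $\hat{\mathcal K}=\mathcal K_{11}+\mathcal K_{22}+i(\mathcal K_{21}-\mathcal K_{12})$. That density evolves by multiplication by $2\mathrm{Re}$ terms of $\beta e^{\mp i\theta}$, which is exactly the Fourier image of the shift terms $R(k\pm1)$.
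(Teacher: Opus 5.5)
Your overall architecture works, but Step 3 is wrong as written for the signal cross-covariance, and your closing paragraph puts the asymmetry in the wrong place.

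The equal-time sequence $R^{zz}_t(k)$ is even in $k$, since trivially $\mathbb E[z^{(n+k)}_tz^{(n)}_t]=\mathbb E[z^{(n)}_tz^{(n+k)}_t]$. Also $\mathcal K_{21}-\mathcal K_{12}\equiv0$ because $\mathcal K_t$ is symmetric, so your $\hat{\mathcal K}$ is just $\mathcal K_{11}+\mathcal K_{22}$. The one-sided object is $R^{zx}_t(k)$. In $\frac{d}{dt}R^{zx}(k)=\beta R^{zx}(k-1)-\lambda_tR^{zx}(k)+q\mathbf 1_{k=0}$, the indices $k<0$ form a closed homogeneous subsystem with zero data, so $R^{zx}_t(k)=0$ for $k<0$.

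Your $\tilde R^{zx}_t$ is built from the real, $\theta$-even function $\mathcal K_{13}+\mathcal K_{24}$, so it equals $\frac12\big(R^{zx}_t(k)+R^{zx}_t(-k)\big)$. For instance $\tilde R^{zx}_t(-1)=\frac12R^{zx}_t(1)\neq0$ for small $t>0$. It therefore does not solve the lattice equation, and the uniqueness step fails.

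The fix is to keep the imaginary part of the cross-spectral density. Set $s=\mathcal K_{11}+\mathcal K_{22}$, $\phi=(\mathcal K_{13}+\mathcal K_{24})+i(\mathcal K_{23}-\mathcal K_{14})$ and $\lambda_t=\beta A_t+qC_t+\frac12\sigma_t^2$. Using only the symmetry of $\mathcal K_t$ and the form of $B_t(\theta)$, Lemma~\ref{lemma22} gives the closed system
\[
\dot\phi=(\beta e^{i\theta}-\lambda_t)\phi+q,\qquad
\dot s=2(\beta\cos\theta-\lambda_t)s+2q\operatorname{Re}\phi+\sigma_t^2,\qquad
\phi_0=\tilde q,\quad s_0=1.
\]
Then $\frac1{2\pi}\int_{\mathbb S^1}e^{-ik\theta}\phi_t(\theta)\,d\theta$ solves the closed $R^{zx}$ lattice equation; the sign of the exponent matters here, unlike for the even function $s$. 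It therefore equals $R^{zx}_t(k)$, and in particular is real. The Fourier coefficients of $s$ then satisfy the $R^{zz}$ equation, whose forcing term is $q\big(R^{zx}(k)+R^{zx}(-k)\big)$. With this correction, $k=1$ and $k=0$ give $A_t$ and $C_t$, and your argument closes.

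With that repair, your route differs from the paper's mainly in the direction of the fixed-point identification. The paper builds the chain from $\mathbb E^\xi[z_tg_t]$ and $\mathbb E^\xi[z_tx]$ and identifies its one-site law with $\xi$. It then uses Herglotz to show that the realified spectral density solves the system of Lemma~\ref{lemma22}, which tacitly uses uniqueness for that nonlinear system. You instead build the chain from the coefficients of Lemma~\ref{lemma22}, check self-consistency by uniqueness for a linear lattice equation, and only then invoke uniqueness of $\xi$.

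What your version buys:
\begin{itemize}
\item It uses only existence in Lemma~\ref{lemma22}, and in fact re-proves uniqueness there, since any solution must reproduce $\xi$'s coefficients.
\item It needs neither Herglotz nor absolute continuity nor $\ell^1$ summability. Fourier coefficients of the continuous field $\mathcal K_t$, with uniqueness in $\ell^\infty$, suffice, because the true covariances are bounded by Cauchy--Schwarz.
\end{itemize}
The paper's direction, in exchange, yields directly the structural statement that $\mathcal K_t(\theta)$ is the spectral density of the replica chain attached to $\xi$.
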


\section{Discussion and Numerical Implementation}

We performed a variety of numerical experiments to illustrate the convergence of the stochastic system to the spectral representation. These results illustrate the convergence as $N\to\infty$. The spectral dynamics also reveals the presence of multiple timescales. There is often an initial transient dynamics, followed by exponential convergence to equilibrium. 

\subsection{Details on Numerical Implementation}
All numerical experiments use the closed spectral covariance system of Lemma~\ref{lemma22}, with the observables $A_t=\mathbb E_\xi[z_tg_t]$ and $C_t=\mathbb E_\xi[z_tx]$ identified in Theorem~\ref{Theorem Herglotz}. In the baseline simulation, used for Figures~\ref{fig:At_validation}--\ref{fig:rho_spectral}, we take $\beta=1.20$, $q=0.60$, $\tilde q=0.40$, final time $T=3$, and constant noise $\sigma_t\equiv 1$. The spectral ODE is integrated by a fourth-order Runge--Kutta scheme on a uniform time grid with $n_t=1500$ steps and on a uniform Fourier grid on $(-\pi,\pi]$ with $n_\theta=256$ modes. The initial covariance is the matrix $\mathcal{K}_0(\theta)$ from \eqref{eq:init}, so that the initial signal-state cross-covariances are $K_{13}(0,\theta)=K_{24}(0,\theta)=\tilde q/2=0.20$. To validate the spectral prediction against the original finite-$N$ spherical dynamics, we simulate the particle system by Euler--Maruyama with reprojection onto the sphere at every step, using $N=1200$, $40$ Monte Carlo repetitions, and time step $\Delta t_{\mathrm{MC}}=5\times 10^{-4}$. For the convergence study in $N$, we use $N\in\{100,200,400,800,1600,3200,6400,12800\}$, with $30$ Monte Carlo repetitions for each system size, time step $\Delta t_{\mathrm{conv}}=2.5\times 10^{-3}$, and $95\%$ pointwise normal-approximation confidence intervals for the mean discrete-time sup-errors. The selected time step was checked to be sufficiently small relative to the finite-size error. In addition, we perform a long-time spectral study for Figures~\ref{fig:kinf_beta1} and \ref{fig:kinf_beta10} with $q=0.10$, $\tilde q=0.10$, $T=12$, $n_t=6000$, $n_\theta=256$, and $\sigma_t\equiv 1$, for $\beta\in\{1,10\}$, in order to examine the late-time spectral profiles.

\begin{figure}[t]
    \centering
    \includegraphics[width=0.82\linewidth]{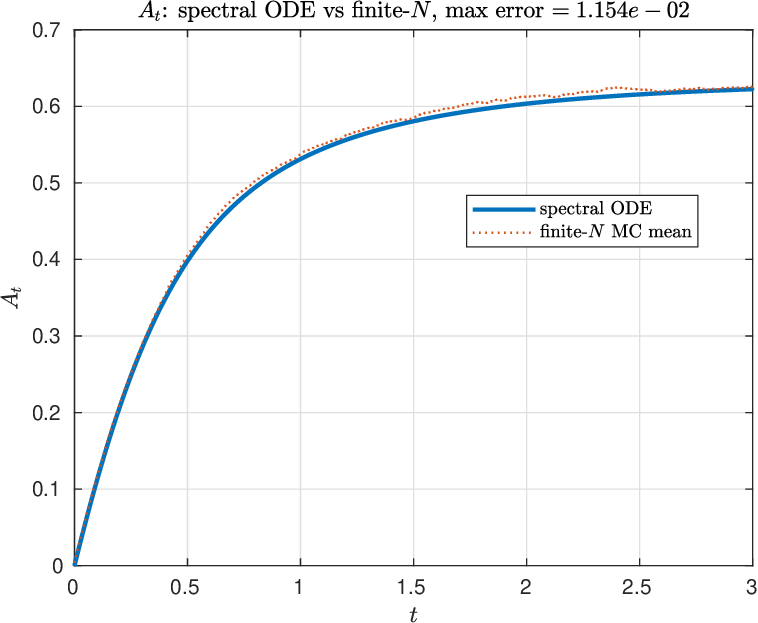}
    \caption{Comparison of the spectral prediction for $A_t=\mathbb E_\xi[z_tg_t]$ with its empirical finite-$N$ counterpart $N^{-1}\sum_{j=1}^N Z_t^jG_t^j$. The solid curve is the solution of the spectral covariance ODE, while the dotted curve is the Monte Carlo mean obtained from Euler--Maruyama simulations of the spherical particle system with $N=1200$. Here the reported max error is the quantity $\max_{t_k}\lvert A^{\mathrm{spec}}(t_k)-A^{\mathrm{MC}}(t_k)\rvert$, namely the largest absolute difference between the spectral curve and the Monte Carlo mean over the discrete set of sampled times used in the comparison.}
    \label{fig:At_validation}
\end{figure}

\begin{figure}[t]
    \centering
    \includegraphics[width=0.82\linewidth]{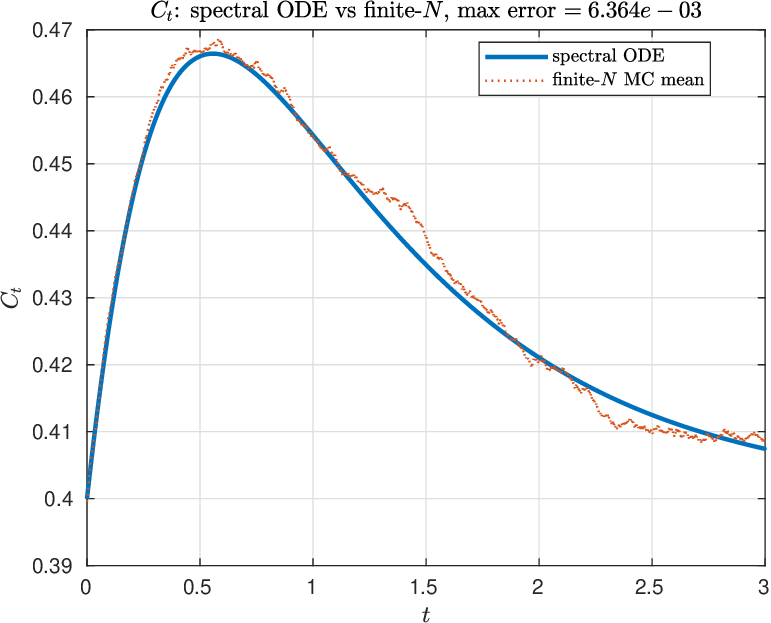}
    \caption{Comparison of the spectral prediction for $C_t=\mathbb E_\xi[z_tx]$ with its empirical finite-$N$ counterpart $N^{-1}\sum_{j=1}^N Z_t^jX^j$. The solid curve represents the solution of the spectral covariance ODE, and the dotted curve is the Monte Carlo mean from the finite-$N$ particle simulations. The reported max error is defined as $\max_{t_k}\lvert C^{\mathrm{spec}}(t_k)-C^{\mathrm{MC}}(t_k)\rvert$, that is, the largest absolute difference between the two curves over the sampled time grid.}
    \label{fig:Ct_validation}
\end{figure}

\begin{figure}[t]
    \centering
    \includegraphics[width=0.82\linewidth]{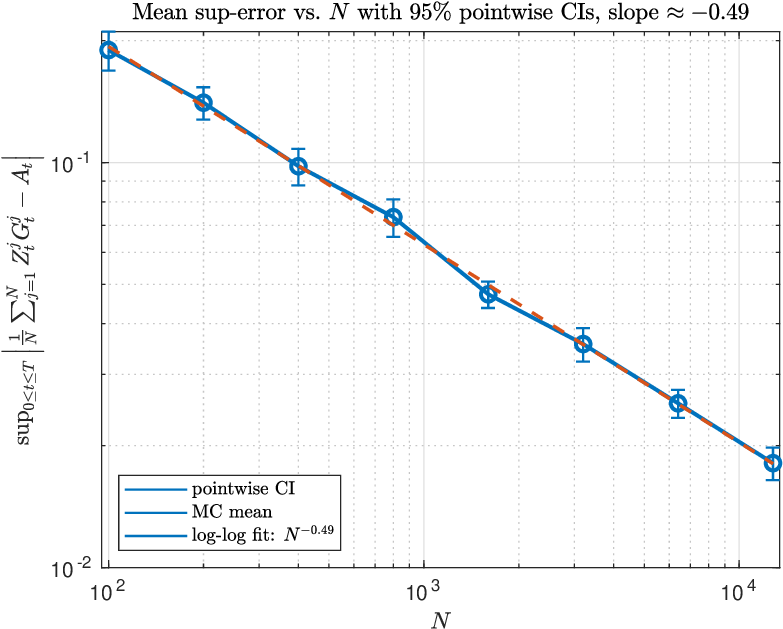}
    \caption{Finite-size convergence of the interaction observable. The figure reports the mean discrete-time sup-error $\sup_{0\le t\le T}\left|\frac{1}{N}\sum_{j=1}^N Z_t^jG_t^j-A_t\right|$ as a function of $N$, together with pointwise confidence intervals and a log--log regression fit. The approximately linear behavior in log--log scale is consistent with a power-law decay of the discrepancy between the particle system and its spectral-limit prediction.}
    \label{fig:Nconv_A}
\end{figure}

\begin{figure}[t]
    \centering
    \includegraphics[width=0.82\linewidth]{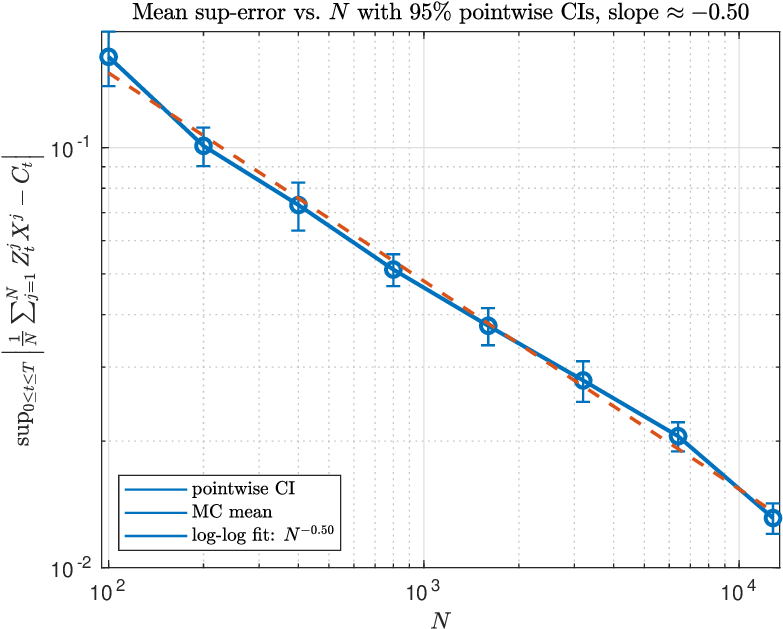}
    \caption{Finite-size convergence of the signal-overlap observable. The figure shows the mean discrete-time sup-error $\sup_{0\le t\le T}\left|\frac{1}{N}\sum_{j=1}^N Z_t^jX^j-C_t\right|$ versus $N$, with pointwise confidence intervals and a log--log fit. The decay of the error is consistent with the convergence of the empirical overlap with the signal to the quantity $C_t$ predicted by the spectral theory.}
    \label{fig:Nconv_C}
\end{figure}

\begin{figure}[t]
    \centering
    \includegraphics[width=0.82\linewidth]{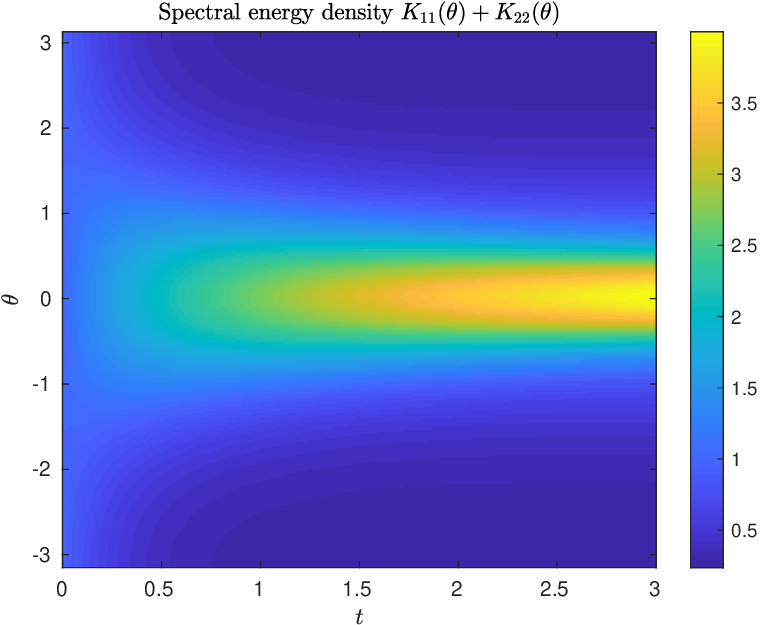}
    \caption{Spectral energy density $K_{11}(\theta)+K_{22}(\theta)$ as a function of time $t$ and Fourier variable $\theta$. The figure displays the evolution of the spectral covariance under the closed ODE system of Lemma~\ref{lemma22}, for the baseline parameter choice $\beta=1.20$, $q=0.60$, $\tilde q=0.40$, and $\sigma_t\equiv 1$.}
    \label{fig:rho_spectral}
\end{figure}


\begin{figure}[t]
    \centering
    \includegraphics[width=0.82\linewidth]{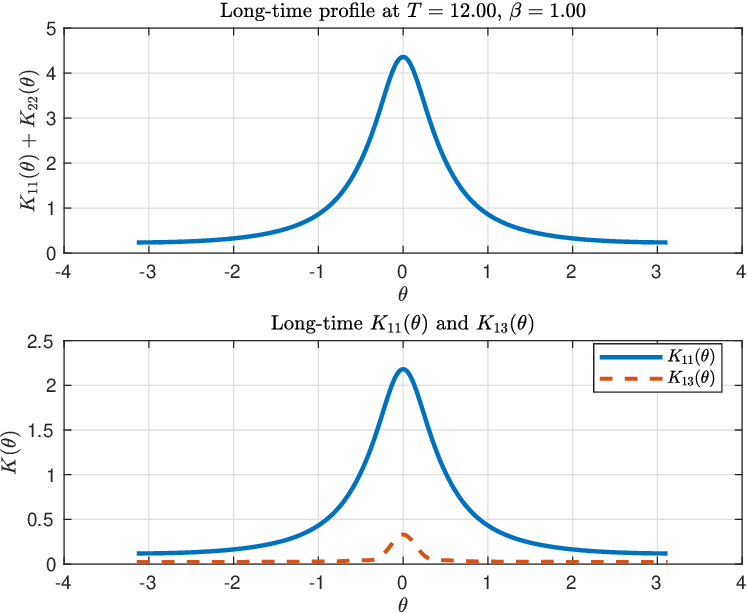}
    \caption{Long-time spectral profile at terminal time $T=12$ for $\beta=1.00$, $q=0.10$, $\tilde q=0.10$, and $\sigma_t\equiv 1$. The upper panel shows the late-time energy density $K_{11}(\theta)+K_{22}(\theta)$, while the lower panel compares $K_{11}(\theta)$ and $K_{13}(\theta)$, illustrating the late-time distribution of energy and signal correlation across Fourier modes.}
    \label{fig:kinf_beta1}
\end{figure}

\begin{figure}[t]
    \centering
    \includegraphics[width=0.82\linewidth]{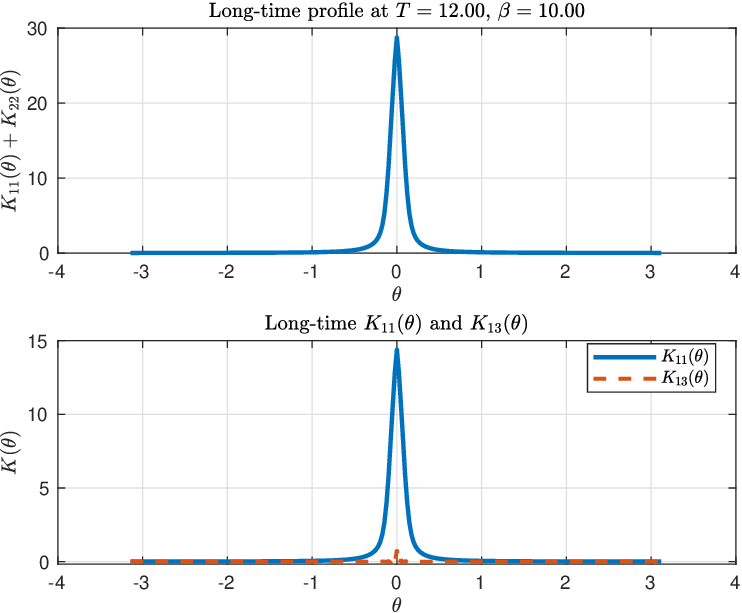}
    \caption{Long-time spectral profile at terminal time $T=12$ for $\beta=10.00$, $q=0.10$, $\tilde q=0.10$, and $\sigma_t\equiv 1$. Displayed in the same format as Figure~\ref{fig:kinf_beta1}, this plot shows how the long-time spectral organization changes when the coupling strength is increased.}
    \label{fig:kinf_beta10}
\end{figure}

\section{Proof of Theorem \ref{Theorem Main Limit}}
\label{sec:proof}

We give the proof in four steps. First we define a discrete-time approximation and
identify its large-$N$ limit. Second we compare the discrete particle system with the
spherical diffusion. Third we show that the discrete limiting law converges to the
continuous fixed point. Finally we combine these estimates by a diagonal argument.

Throughout this section, let
\[
\mathcal J_N=N^{-1/2}(J^{jk})_{j,k\leq N},
\qquad
P_z=I_N-\frac{zz^T}{N},\qquad |z|^2=N.
\]
In vector form, \eqref{eq: main dynamics} is
\begin{equation}
 dZ_t=P_{Z_t}(qX+\beta \mathcal J_N Z_t)\,dt
 -\frac{N-1}{2N}\sigma_t^2 Z_t\,dt
 +\sigma_tP_{Z_t}\,dW_t .
 \label{eq:vector-spherical}
\end{equation}
In particular, $|Z_t|^2=N$ for every $t$ almost surely.  We use $d_W$ for the
$2$-Wasserstein distance.  On the discrete state space
\[
E_m=\mathbb R\times \mathbb R^{m+1}\times \mathbb R^{m+1}
\]
we use the normalized norm
\begin{equation}
 \|(x,y,g)\|_m^2=|x|^2+\frac{1}{m+1}\sum_{a=0}^m\big(|y_a|^2+|g_a|^2\big).
 \label{eq:discrete-normalized-norm}
\end{equation}
All constants denoted by $C$ or $c$ below may depend on
$T,q,\beta$ and $\sup_{t\leq T}|\sigma_t|$, but not on $N$ or $m$.

\subsection{The discrete limiting law}

Fix $m\geq1$, put $\Delta=T/m$ and $t_a=a\Delta$.  Define the Euler system
\begin{align}
 y_{a+1}^j
  &=y_a^j+\Delta\bigg(qX^j+\beta g_a^j
   -\Big(qC_a^N+\beta A_a^N+\frac12\sigma_{t_a}^2\Big)y_a^j\bigg)
   +\sigma_{t_a}w_a^j, \label{eq:euler-particle}\\
 g_a^j&=(\mathcal J_Ny_a)^j,\qquad
 C_a^N=\frac1N\sum_{k=1}^Ny_a^kX^k,\qquad
 A_a^N=\frac1N\sum_{k=1}^Ny_a^kg_a^k, \label{eq:euler-observables}
\end{align}
where $w_a=W_{t_{a+1}}-W_{t_a}$ and $y_0=Z_0$.  Let
\begin{equation}
 \hat\mu^{(m),N}
 =\frac1N\sum_{j=1}^N\delta_{(X^j,y_{0:m}^j,g_{0:m}^j)}.
 \label{eq:discrete-empirical}
\end{equation}

We next define its deterministic candidate limit.  Let
$x,\eta,w_0,\ldots,w_{m-1}$ be independent centered Gaussian variables, with
$x,\eta\sim\mathcal N(0,1)$ and $w_a\sim\mathcal N(0,\Delta)$, and put
\begin{equation}
 y_0=\tilde q x+\sqrt{1-\tilde q^2}\,\eta.
 \label{eq:disc-initial}
\end{equation}
The centered Gaussian vector $(g_0,\ldots,g_m)$ is independent of
$(x,\eta,w_0,\ldots,w_{m-1})$ and is specified self-consistently by
\begin{equation}
 \mathbb E[g_ag_b]=\mathbb E[y_ay_b],\qquad 0\leq a,b\leq m,
 \label{eq:disc-g-cov}
\end{equation}
while
\begin{equation}
 y_{a+1}=y_a+\Delta\bigg(qx+\beta g_a
 -\Big(qC_a+\beta A_a+\frac12\sigma_{t_a}^2\Big)y_a\bigg)
 +\sigma_{t_a}w_a,
 \label{eq:disc-limit-update}
\end{equation}
where
\begin{equation}
 C_a=\mathbb E[y_ax],\qquad A_a=\mathbb E[y_ag_a].
 \label{eq:disc-AC}
\end{equation}
We denote the law of $(x,y_{0:m},g_{0:m})$ by $\xi^{(m)}$. For $0 \leq a \leq m$, the law of $(x,y_{0:a},g_{0:a})$ by $\xi^{(m,a)}$

\begin{lemma}\label{Lemma Limiting Laws Well Defined}
For every $m$, the law $\xi^{(m)}$ is uniquely determined.  The continuous law
$\xi$ in Theorem \ref{Theorem Main Limit} is also uniquely determined.  If
$\tilde\xi^{(m)}$ denotes the law of
$(x,z_{t_0:t_m},g_{t_0:t_m})$ under $\xi$, then
\begin{equation}
 d_W\big(\xi^{(m)},\tilde\xi^{(m)}\big)\longrightarrow0
 \qquad (m\to\infty),
 \label{eq:disc-to-cont-law}
\end{equation}
where $d_W$ on $E_m$ is associated with \eqref{eq:discrete-normalized-norm}.
\end{lemma}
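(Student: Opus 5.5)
The plan treats the discrete and continuous laws by the same fixed-point mechanism, then compares them through a synchronous coupling.

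\emph{Discrete law.} I would construct $\xi^{(m)}$ by forward induction in $a$, using the causal structure of \eqref{eq:disc-limit-update}. Suppose the law $\xi^{(m,a)}$ of $(x,y_{0:a},g_{0:a})$ is known and Gaussian. Then $C_a=\mathbb E[y_ax]$ and $A_a=\mathbb E[y_ag_a]$ are determined, so $y_{a+1}$ is a fixed linear function of $(x,\eta,w_{0:a},g_{0:a})$.

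Consequently the covariances $\mathbb E[y_{a+1}y_b]$ for $b\le a+1$ are determined. Since $g$ is independent of $(x,\eta,w)$, prescribing $\mathbb E[g_{a+1}g_b]=\mathbb E[y_{a+1}y_b]$ extends the covariance of $g_{0:a}$ to $g_{0:a+1}$. This extension is positive semidefinite because it coincides with the Gram matrix of $y_{0:a+1}$.

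Every quantity is therefore forced at each step, which gives both existence and uniqueness of $\xi^{(m)}$. The law is centered Gaussian since all maps are linear given the scalars $A_a,C_a$.

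\emph{Continuous law.} Here I would set up a Picard iteration on the pair $(K,\Phi)$, where $K(s,t)=\mathbb E[z_sz_t]$ on $[0,T]^2$ and $\Phi_t=(\mathbb E[z_tg_t],\mathbb E[z_tx])$. Given $(K,\Phi)$, I take $g$ Gaussian with covariance $K$, solve the linear SDE for $z$, and read off a new pair $(K',\Phi')$.

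The sphere-type bound needed is a priori control of $\mathbb E[z_t^2]$. I would obtain it first from Gronwall, using the bound $|\mathbb E[z_tg_t]|\le \mathbb E[z_t^2]$ (via Cauchy--Schwarz with $\mathbb E g_t^2=\mathbb E z_t^2$). I would then restrict to a ball where the map is Lipschitz.

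To compare two inputs, I couple their $g$'s optimally. Because the two covariances are both Gram matrices, I can realise $g,g'$ as $\int \phi$ over a common white-noise basis, so that $\mathbb E|g_t-g'_t|^2\lesssim \sup|K-K'|$ along the iterates. This yields a contraction estimate of the form
\[
\sup_{s,t\le u}|K'_1-K'_2|+\sup_{t\le u}|\Phi'_1-\Phi'_2|\le C\int_0^u\Big(\sup_{r\le s}|K_1-K_2|+|\Phi_1-\Phi_2|\Big)ds ,
\]
whose iterates converge, giving a unique fixed point. Equivalently one could argue on path laws in the 2-Wasserstein metric with the Sznitman-type bound $d_W(\Psi(\mu),\Psi(\nu))^2_t\le C\int_0^t d_W(\mu,\nu)_s^2ds$.

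\emph{Convergence \eqref{eq:disc-to-cont-law}.} I would couple $\xi^{(m)}$ and $\tilde\xi^{(m)}$ on a single probability space. The ingredients are the same $x,\eta$, the Brownian increments $w_a=W_{t_{a+1}}-W_{t_a}$, and a joint construction of $g_{0:m}$ and $g_{t_0:t_m}$ coupled via the Gram structure. The error $e_a=\mathbb E|y_a-z_{t_a}|^2+\mathbb E|g_a-g_{t_a}|^2+|A_a-\mathbb E z_{t_a}g_{t_a}|+|C_a-\mathbb E z_{t_a}x|$ should then satisfy a discrete Gronwall inequality
\[
e_{a+1}\le (1+C\Delta)e_a+C\Delta\,\omega(\Delta),
\]
where $\omega(\Delta)$ is the local truncation error.

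The term $\omega$ is controlled by the $L^2$ Hölder-$1/2$ continuity of $z$ and $g$, the latter inherited from $K$, together with continuity of $\sigma$. This gives $\max_a e_a\to0$, and averaging over $a$ bounds $d_W$ in the normalized norm \eqref{eq:discrete-normalized-norm}.

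\emph{Main obstacle.} I expect the hardest step to be the coupling of the Gaussian fields $g$ at each step so that $\mathbb E|g_a-g_{t_a}|^2$ is controlled by covariance differences of $y$ and $z$ at \emph{all} earlier times, and not just the current one. I would handle this by building each $g$ sequentially through its Cholesky/innovation decomposition, driven by a common i.i.d. sequence. The difficulty is that Cholesky factors are not uniformly Lipschitz near degeneracy.

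To avoid that, I would instead represent $g$ as a linear functional of an independent copy of $(x,\eta,W)$. Concretely, $g$ is an independent copy of $z$ obtained from a fresh copy of $(\eta,W,g)$ — a replica construction anticipating Section 5. Iterating this representation makes the coupling error inherit the same Gronwall bound, at the cost of an infinite regress that is closed by the contraction of the previous step.
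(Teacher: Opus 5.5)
Your inductive construction of $\xi^{(m)}$ matches the paper's. \textbf{The genuine gap is in the global-in-time well-posedness of $\xi$.} You correctly see that a sphere-type bound on $m_t=\mathbb E[z_t^2]$ is needed, but the mechanism you propose (Gronwall fed by $|\mathbb E[z_tg_t]|\le m_t$) does not close. In $\frac{d}{dt}m_t$ the term $-2\beta A_tm_t$, coming from the drift $-\beta z_t\mathbb E[z_tg_t]$, is then only bounded by $2\beta m_t^2$, and $-2qC_tm_t$ only by $2qm_t^{3/2}$. This gives a Riccati-type inequality $\dot m_t\le c(1+m_t)^2$, which controls $m_t$ only up to a time of order $1/c$, not on an arbitrary $[0,T]$.

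For the same reason no ball is invariant under your Picard map on long intervals. The outputs depend on $\Phi$ through $R(t,s)=\exp(-\int_s^t\lambda_r\,dr)$, so $\sup|\Phi|\le M$ only yields output bounds of size $e^{cMT}$, which exceed $M$ unless $T$ is small. Your contraction therefore produces only a local solution.

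What is missing is the exact cancellation at a fixed point that the paper uses in \eqref{eq:Q-diagonal}:
\[
\frac{d}{dt}\mathbb E[z_t^2]=2qC_t+2\beta A_t-2\lambda_t\,\mathbb E[z_t^2]+\sigma_t^2=\big(2qC_t+2\beta A_t+\sigma_t^2\big)\big(1-\mathbb E[z_t^2]\big).
\]
This is a linear equation for $1-\mathbb E[z_t^2]$ with continuous coefficient and zero initial datum, so $\mathbb E[z_t^2]\equiv1$ on the existence interval. Only then does Cauchy--Schwarz give $|A_t|,|C_t|\le1$. That bound is independent of the interval, so it fixes a uniform step size and lets the local fixed point be continued to $[0,T]$. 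The identity holds only at fixed points, not along Picard iterates where $\Phi$ is an external input. The argument must therefore be organized as local contraction plus continuation, or as truncation of $\lambda$ plus an a posteriori check.

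Two smaller points. First, in the covariance formulation your coupling is lossy. From $\mathbb E|g_t-g'_t|^2\le C\sup|K-K'|$ you only get $|K_1'-K_2'|\le C\sup|K_1-K_2|^{1/2}$, since output covariance differences are controlled by $L^2$ distances of the $z$'s. Your displayed Volterra estimate is nevertheless true, for a simpler reason: for fixed $\Phi$ the outputs are affine in $K$. Indeed $K$ enters only through $\beta^2\int_0^s\int_0^tR(s,r)R(t,v)K(r,v)\,dv\,dr$ and $\beta\int_0^tR(t,r)K(r,t)\,dr$, cf.\ \eqref{eq:A-volterra}--\eqref{eq:Q-volterra}, so no coupling of the $g$'s is needed. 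Your $W_2$ Sznitman variant, with $g,g'$ optimally coupled through the $z$-marginals, is also fine.

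Second, for \eqref{eq:disc-to-cont-law} your replica idea is a legitimate alternative to the paper's argument. Drive the discrete chain $y^p_{a+1}=y^p_a+\Delta(qx^p+\beta y^{p-1}_a-\lambda^{(m)}_ay^p_a)+\sigma_{t_a}w^p_a$, with $\lambda^{(m)}_a=qC_a+\beta A_a+\frac12\sigma_{t_a}^2$, and the continuous chain of Section 5 by the same $(x^p,\eta^p,W^p)$. Stationarity in $p$ then gives $\mathbb E|g_a-g_{t_a}|^2=\mathbb E|y_a-z_{t_a}|^2$ exactly. So the regress closes by stationarity, not by a contraction, and the discrete Gronwall yields an $L^2$ bound directly in the normalized norm \eqref{eq:discrete-normalized-norm}.

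The paper instead treats the discrete covariance recursion as an Euler scheme for the Volterra system and then invokes $W_2$-continuity of Gaussian laws in their covariances. Your route avoids passing from entrywise covariance convergence to $W_2$ in growing dimension. It does, however, need both chains' laws identified with $\xi^{(m)}$ and $\xi$. That identification uses uniqueness of $\xi$, and hence the identity above.
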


\begin{proof}
The discrete law is constructed inductively.  Suppose that the joint law up to time
$a$ has been constructed.  Equation \eqref{eq:disc-limit-update} determines $y_{a+1}$,
hence all numbers $\mathbb E[y_{a+1}y_b]$, $0\leq b\leq a+1$.  There is a unique
centered Gaussian extension $g_{a+1}$ whose covariances are prescribed by
\eqref{eq:disc-g-cov}.  This proves existence and uniqueness of $\xi^{(m)}$.

For the continuous law it is convenient to work directly with covariances.  Set
\[
 Q(s,t)=\mathbb E[z_sz_t],\qquad C(t)=\mathbb E[z_tx],\qquad
 A(t)=\mathbb E[z_tg_t],
\]
and
\[
 \lambda(t)=qC(t)+\beta A(t)+\frac12\sigma_t^2,
 \qquad
 R(t,s)=\exp\left\{-\int_s^t\lambda(r)\,dr\right\},\qquad s\leq t.
\]
Variation of constants gives
\begin{equation}
 z_t=C(t)x+\sqrt{1-\tilde q^2}\,R(t,0)\eta
 +\beta\int_0^tR(t,r)g_r\,dr
 +\int_0^tR(t,r)\sigma_r\,dW_r,
 \label{eq:voc-z}
\end{equation}
where necessarily
\begin{align}
 C(t)&=\tilde q R(t,0)+q\int_0^tR(t,r)\,dr, \label{eq:C-volterra}\\
 A(t)&=\beta\int_0^tR(t,r)Q(r,t)\,dr, \label{eq:A-volterra}\\
 Q(s,t)&=C(s)C(t)+(1-\tilde q^2)R(s,0)R(t,0) \nonumber\\
 &\quad+\beta^2\int_0^s\int_0^tR(s,r)R(t,v)Q(r,v)\,dv\,dr
 +\int_0^{s\wedge t}R(s,r)R(t,r)\sigma_r^2\,dr.
 \label{eq:Q-volterra}
\end{align}
Conversely, any continuous solution of
\eqref{eq:C-volterra}--\eqref{eq:Q-volterra} determines through
\eqref{eq:voc-z} a Gaussian law satisfying the specification in Theorem
\ref{Theorem Main Limit}.  On a sufficiently short interval the right-hand side of
\eqref{eq:C-volterra}--\eqref{eq:Q-volterra} is a contraction in the uniform norm:
differences of $R$ are bounded by the interval length times the uniform differences of
$A$ and $C$, and every remaining difference occurs under at least one time integral. Thus the system has a unique local solution.

At a fixed point, It\^o's formula and \eqref{eq:Q-volterra} give
\begin{align}
 \frac{d}{dt}Q(t,t)
 &=2qC(t)+2\beta A(t)-2\lambda(t)Q(t,t)+\sigma_t^2 \nonumber\\
 &=\big(2qC(t)+2\beta A(t)+\sigma_t^2\big)\big(1-Q(t,t)\big).
 \label{eq:Q-diagonal}
\end{align}
Since $Q(0,0)=1$, it follows that
\begin{equation}
 Q(t,t)=1,\qquad 0\leq t\leq T.
 \label{eq:limit-spherical}
\end{equation}
Consequently $|A(t)|\leq1$ and $|C(t)|\leq1$ by Cauchy--Schwarz, so the local
construction can be continued to all of $[0,T]$.  This proves uniqueness of $\xi$.

Finally, the discrete covariance recursion obtained from
\eqref{eq:disc-limit-update} is the left-endpoint Euler discretization of
\eqref{eq:C-volterra}--\eqref{eq:Q-volterra}.  The bounds just obtained are uniform in
$m$.  Uniform continuity of $\sigma$ and a discrete Gronwall argument therefore give
\begin{equation}
 \max_{0\leq a,b\leq m}
 \Big(|C_a-C(t_a)|+|A_a-A(t_a)|
 +|\mathbb E[y_ay_b]-Q(t_a,t_b)|\Big)\longrightarrow0.
 \label{eq:covariance-euler-convergence}
\end{equation}
Centered Gaussian laws depend continuously in $W_2$ on their covariance matrices.
Using the norm in \eqref{eq:discrete-normalized-norm}, the uniform convergence
in \eqref{eq:covariance-euler-convergence} yields
\eqref{eq:disc-to-cont-law}.
\end{proof}

\begin{lemma}\label{Lemma Discretized Convergence}
For every fixed $m$ and every $\epsilon>0$ there is $c=c(m,\epsilon)>0$ such that
\begin{equation}
 \limsup_{N\to\infty}\frac1N\log
 \mathbb P\big(d_W(\hat\mu^{(m),N},\xi^{(m)})>\epsilon\big)\leq-c.
 \label{eq:discrete-exponential-convergence}
\end{equation}
\end{lemma}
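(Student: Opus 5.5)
The plan is to argue by induction on the time index $a\in\{0,\dots,m\}$, in the spirit of the Bolthausen conditioning technique for approximate message passing. For fixed $m$, the induction hypothesis at stage $a$ is that the empirical law
\[
\hat\mu^{(m,a),N}=\frac1N\sum_{j=1}^N\delta_{(X^j,y^j_{0:a},g^j_{0:a})}
\]
satisfies the exponential bound \eqref{eq:discrete-exponential-convergence} with $\xi^{(m)}$ replaced by $\xi^{(m,a)}$. The same bound is also carried for all empirical Gram quantities $N^{-1}\langle y_b,y_c\rangle$, $N^{-1}\langle y_b,X\rangle$ and $N^{-1}\langle y_b,g_c\rangle$, for $b,c\le a$. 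These should converge to the corresponding entries $\mathbb E[y_by_c]$, $\mathbb E[y_bx]$ and $\mathbb E[y_bg_c]$ under $\xi^{(m)}$. Since all of them are averages of $N$ terms with at most quadratic growth, the Wasserstein statement and the Gram statements can be propagated together.

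\emph{Base case.} At $a=0$ we only need the law of $(X,Z_0)$. By construction $X$ and $Z_0$ are Lipschitz functions of the iid Gaussians $\tilde X,\tilde Z_0$, divided by norms that concentrate exponentially (chi-square tail bounds). Hence the empirical measure of $(X^j,y^j_0)$ is exponentially close in $d_W$ to that of iid $(x,\tilde q x+\sqrt{1-\tilde q^2}\eta)$. For iid samples of a Gaussian vector of fixed dimension, $W_2$-concentration at exponential speed in $N$ is standard. One route is to truncate at a large level $L$, use $W_1$/bounded-Lipschitz concentration for the truncated part, and control the tails by Cramér bounds on $N^{-1}\sum|\cdot|^2\mathbf 1_{|\cdot|>L}$.

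\emph{The component $g_0$.} We have $g_0=\mathcal J_Ny_0$, and $y_0$ is independent of $J$. Conditionally on $y_0$, the entries of $g_0$ are iid $\mathcal N(0,|y_0|^2/N)=\mathcal N(0,1)$ and are independent of $(X,y_0)$, which matches \eqref{eq:disc-g-cov}.

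\emph{Inductive step.} Given stage $a$, the update \eqref{eq:euler-particle} is a fixed linear map of $(X^j,y^j_a,g^j_a,w^j_a)$. Its coefficients $C^N_a$ and $A^N_a$ are Gram entries, so they are exponentially close to $C_a$ and $A_a$. The fresh increments $w_a$ are independent. Together these give the extension to $y_{a+1}$.

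The real work is $g_{a+1}=\mathcal J_Ny_{a+1}$. Let $Y=[y_0,\dots,y_a]\in\mathbb R^{N\times(a+1)}$ and $\Gamma=[g_0,\dots,g_a]=\mathcal J_NY$. Because the entries of $J$ are iid, with no symmetry constraint, conditioning on the $\sigma$-algebra generated by $X$, $Y$, the noises and $\Gamma$ gives the distributional identity
\[
\mathcal J_N=\Gamma (Y^TY)^{+}Y^T+\tilde{\mathcal J}_N P^\perp_Y .
\]
Here $\tilde{\mathcal J}_N$ is an independent copy of $\mathcal J_N$ and $P^\perp_Y$ is the projection onto $\mathrm{span}(Y)^\perp$. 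It follows that
\[
g_{a+1}=\Gamma\,(N^{-1}Y^TY)^{+}\,(N^{-1}Y^Ty_{a+1})+\tilde{\mathcal J}_NP^\perp_Y y_{a+1}.
\]
The first term is a linear combination of the previous fields $g_0,\dots,g_a$. Its coefficients are functions of Gram entries and converge to exactly the regression coefficients of the Gaussian extension in Lemma~\ref{Lemma Limiting Laws Well Defined}.

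The second term, conditionally, is a vector of iid $\mathcal N(0,N^{-1}|P^\perp_Yy_{a+1}|^2)$ variables, independent of everything else. Its variance converges to the conditional variance $\mathbb E[y_{a+1}^2]-\mathrm{Cov}(y_{a+1},y_{0:a})\mathrm{Cov}(y_{0:a})^{-1}\mathrm{Cov}(y_{0:a},y_{a+1})$. Therefore the empirical law of $(X,y_{0:a+1},g_{0:a+1})$ is the image of the stage-$a$ empirical law, coupled with an independent iid Gaussian column, under a map that is Lipschitz in the data. Its coefficients are exponentially close to those defining $\xi^{(m,a+1)}$. The $W_2$ estimate then follows by coupling the fresh column with iid $\mathcal N(0,1)$ variables and applying the concentration bound from the base case. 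The new Gram entries are handled by the same decomposition: for example, $N^{-1}\langle y_b,\tilde{\mathcal J}_NP^\perp y_{a+1}\rangle$ is conditionally Gaussian with variance $O(1/N)$, hence exponentially small.

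\emph{Main obstacle.} I expect the hardest point to be the continuity of $(Y^TY)^+$, i.e.\ a lower bound on the smallest eigenvalue of $\mathrm{Cov}(y_{0:a})$ under $\xi^{(m)}$ and of its empirical version. When $\sigma_{t_b}>0$, the fresh noise $w_b$ makes each $y_{b+1}$ non-degenerate given the past, so $\mathrm{Cov}(y_{0:a})$ is positive definite. Exponential closeness of the empirical Gram matrix then keeps its smallest eigenvalue away from zero outside an event of probability $e^{-cN}$.

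If $\sigma$ vanishes at some grid times, I would first try a regularized version: replace $\sigma_{t_b}$ by $\sigma_{t_b}+\delta$ and bound the $d_W$ discrepancy, both for the particle system and for $\xi^{(m)}$, by $C(m)\delta$ uniformly in $N$. This uses a deterministic Gronwall bound on the linear recursion, since $\|\mathcal J_N\|_{op}\le 3$ with probability $1-e^{-cN}$. One then sends $\delta\to0$ after the exponential estimate. This operator-norm bound also supplies the a priori moment bounds on $N^{-1}|y_a|^2$ and $N^{-1}|g_a|^2$ that are needed to pass from weak closeness to $W_2$ closeness.
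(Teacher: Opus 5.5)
Your proposal is correct and follows essentially the same route as the paper: induction over the time index, conditioning the iid rows of $J$ on the linear constraints $\mathcal J_N Y=\Gamma$, and checking that the conditional mean and variance of $g_{a+1}$ match the Gaussian extension in \eqref{eq:disc-g-cov}. Your explicit decomposition $\Gamma(Y^TY)^+Y^T+\tilde{\mathcal J}_NP_Y^\perp$ and your lower bound on the smallest eigenvalue are a more careful version of the paper's brief appeal to a conditional Sanov theorem and to the Moore--Penrose inverse. Your $\sigma+\delta$ regularization is valid but unnecessary: since $\beta>0$, the fresh part of $g_a$ gives $\mathrm{Var}(y_{a+1}\mid y_{0:a})\geq \Delta^2\beta^2\,\mathrm{Var}(y_a\mid y_{0:a-1})>0$, so $\mathrm{Cov}(y_{0:m})$ is positive definite under $\xi^{(m)}$ for every choice of $\sigma$.
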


\begin{proof}
Our proof proceeds inductively in time. Define the empirical measure upto timestep $a$:
\begin{equation}
 \hat\mu^{(m,a),N}
 =\frac1N\sum_{j=1}^N\delta_{(X^j,y_{0:a}^j,g_{0:a}^j)} \in \mathcal{P}\big( \mathbb{R} \times \mathbb{R}^{a+1} \times  \mathbb{R}^{a+1}\big).
 \label{eq:discrete-empirical a}
\end{equation}
The 2-Wasserstein Distance on $\mathcal{P}\big( \mathbb{R} \times \mathbb{R}^{a+1} \times  \mathbb{R}^{a+1}\big)$ is denoted by $d_W^{(a)}$.

We first claim that the empirical law of $(X^j,Z_0^j)_{j\leq N}$ is exponentially close to the law of
$(x,y_0)$: that is, for any $\epsilon > 0$, there exists $C_{\epsilon}$ such that
\begin{align}
\lsup{N} N^{-1}\log \mathbb{P}\bigg( d^{(0)}_W\big( \hat\mu^{(m,0),N}, \xi^{(m,0)} \big) \geq \epsilon \bigg) \leq - C_{\epsilon}.\label{inductive step zero}
\end{align}
Indeed, both variables are obtained from independent Gaussian samples by
normalization with empirical Euclidean norms, and those norms have exponentially small
deviations from one. This establishes \eqref{inductive step zero}.

We now make an inductive hypothesis: that the empirical law at time $a$ is exponentially-close to its limit. Define Hypothesis $(a)$ to be that for any $\epsilon > 0$, there exists a constant $C_{\epsilon}^a$ such that
\begin{align}
\lsup{N} N^{-1}\log \mathbb{P}\bigg( d^{(a)}_W\big( \hat\mu^{(m,a),N}, \xi^{(m,a)} \big) \geq \epsilon \bigg) \leq - C^a_{\epsilon}.\label{inductive step a}
\end{align}
Our next goal is to prove that, if Hypothesis $(a)$ holds, then necessarily Hypothesis $(a+1)$ holds too. To this end, let
$Y_a$ be the $N\times(a+1)$ matrix whose columns are $y_0,\ldots,y_a$, and put
\begin{equation}
 K_a^N=\frac1N Y_a^TY_a.
 \label{eq:empirical-Ka}
\end{equation}
Condition on the complete history $(X,y_{0:a},g_{0:a})$.  For a fixed row $j$, the
Gaussian vector $N^{-1/2}(J^{j1},\ldots,J^{jN})$ is then conditioned on the linear
constraints
\[
 N^{-1/2}\sum_{k=1}^NJ^{jk}y_b^k=g_b^j,\qquad 0\leq b\leq a.
\]
Since the rows of
$J$ are independent, the residual Gaussian variables must also be independent over $j$.

Suppose first that $K_a^N$ is invertible.  Put
\[
 k_{a+1}^N(b)=\frac1N\langle y_{a+1},y_b\rangle,
 \qquad
 s_{a+1}^N=\frac1N|y_{a+1}|^2.
\]
The standard conditional Gaussian formula gives
\begin{align}
 \mathbb E\big[g_{a+1}^j\mid X,y_{0:a+1},g_{0:a}\big]
  &=(k_{a+1}^N)^T(K_a^N)^{-1}g_{0:a}^j,
 \label{eq:cond-mean-gauss}\\
 \operatorname{Var}\big(g_{a+1}^j\mid X,y_{0:a+1},g_{0:a}\big)
  &=s_{a+1}^N-(k_{a+1}^N)^T(K_a^N)^{-1}k_{a+1}^N.
 \label{eq:cond-var-gauss}
\end{align}
These are exactly the conditional mean and variance of the Gaussian extension in
\eqref{eq:disc-g-cov}.  The update of $y_{a+1}^j$, conditional on the past, is also
Gaussian, with independent innovation $\sigma_{t_a}w_a^j$.  
We can now conclude that Hypothesis $(a+1)$ must hold using a conditional Sanov Theorem.


If $K_a^N$ is singular, the same formulas hold with the Moore--Penrose inverse and the
conditional Gaussian law is supported on the corresponding affine subspace.
\end{proof}

\subsection{Short-time regularity and comparison with the spherical system}

For $L>2$, define the operator-norm event
\begin{equation}
 \mathcal O_N(L)=\{\|\mathcal J_N\|_{\mathrm{op}}\leq L\}.
 \label{eq:operator-event}
\end{equation}
A standard bound gives a constant $c_L>0$ such that
\begin{equation}
 \mathbb P\big(\mathcal O_N(L)^c\big)\leq e^{-c_LN}
 \label{eq: bound max eigenvalue of random matrix}
\end{equation}
for all sufficiently large $N$.  

For $m\geq1$ define
\begin{equation}
 f^{(m),j}=\mathbf 1\left\{
 \max_{0\leq a<m}\sup_{0\leq h\leq\Delta}
 |Z_{t_a+h}^j-Z_{t_a}^j|\geq m^{-1/4}\right\}.
 \label{eq:fluctuation-indicator}
\end{equation}

Set
\begin{equation}
 \Omega_m^N=\frac1N\sum_{j=1}^N
 \max_{0\leq a<m}\sup_{0\leq h\leq\Delta}
 |Z_{t_a+h}^j-Z_{t_a}^j|^2.
 \label{eq:Omega-def}
\end{equation}
\begin{lemma}\label{Lemma Time Fluctuations of Original System}
For every $\epsilon>0$ there is $m_\epsilon$ such that, for every $m\geq m_\epsilon$,
\begin{equation}
 \limsup_{N\to\infty}\frac1N\log\mathbb P(\Omega_m^N>\epsilon)<0.
 \label{eq:Omega-exp}
\end{equation}
Moreover,
\begin{equation}
 \lim_{m\to\infty}\limsup_{N\to\infty}\mathbb E[\Omega_m^N]=0.
 \label{eq:Omega-mean}
\end{equation}
Consequently, for every fixed $\epsilon>0$, 
\begin{align}
\limsup_{m\to\infty}\limsup_{N\to\infty} N^{-1}\log \mathbb{P}\bigg(\sum_{j=1}^N f^{(m),j} \geq \epsilon \bigg) = -\infty.
\end{align}
\end{lemma}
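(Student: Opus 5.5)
We bound $\Omega_m^N$ by splitting each increment of \eqref{eq:vector-spherical} into a drift part and a martingale part. For $t\in[t_a,t_a+\Delta]$ write
\[
Z_t-Z_{t_a}=\int_{t_a}^t\Big(P_{Z_s}(qX+\beta\mathcal J_NZ_s)-\tfrac{N-1}{2N}\sigma_s^2Z_s\Big)ds+\int_{t_a}^t\sigma_sP_{Z_s}\,dW_s=:D^{a}_t+M^{a}_t .
\]
Then $\Omega_m^N\le 2N^{-1}\sum_j\max_a\sup_h|D^{a,j}_{t_a+h}|^2+2N^{-1}\sum_j\max_a\sup_h|M^{a,j}_{t_a+h}|^2$, and we treat the two sums separately.

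\emph{Drift part.} On $\mathcal O_N(L)$ we have $|P_{Z_s}(qX+\beta\mathcal J_NZ_s)|\le q|X|+\beta L|Z_s|=(q+\beta L)\sqrt N$, since $|X|=|Z_s|=\sqrt N$. Cauchy--Schwarz in time then gives
\[
\sum_j\max_a\sup_h|D^{a,j}_{t_a+h}|^2\le \sum_a\Delta\int_{t_a}^{t_{a+1}}|\text{drift}_s|^2ds\le C\Delta^2 m N(1+L)^2 .
\]
This is deterministic on $\mathcal O_N(L)$, and $C\Delta^2 m=CT^2/m\to0$. By \eqref{eq: bound max eigenvalue of random matrix}, the exponential bound for this part costs at most $e^{-c_LN}$.

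\emph{Martingale part.} This is the main obstacle: the supremum over $h$ sits inside the coordinate sum, and we need exponential (not merely polynomial) concentration. We write $P_{Z_s}dW_s=dW_s-N^{-1}Z_s\langle Z_s,dW_s\rangle$. The first piece contributes $N^{-1}\sum_j\max_a\sup_h|\int_{t_a}^{t_a+h}\sigma_sdW^j_s|^2$. After a time change (using that $\sigma$ is deterministic and bounded), this is an average of iid variables $\xi_j:=\max_a\sup_{h\le\Delta}|\tilde W^j_{t_a+h}-\tilde W^j_{t_a}|^2$. By the reflection principle and a union bound over $a$, $\mathbb E\xi_j\le C\Delta\log m$, and $\xi_j/\Delta$ has Gaussian-type tails; in particular $\xi_j/(\Delta\log m)$ has a finite exponential moment uniformly in $m$. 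Cramér's theorem then gives $\mathbb P(N^{-1}\sum_j\xi_j>\epsilon)\le e^{-cN}$ once $C\Delta\log m<\epsilon/2$. The second piece is $N^{-1}Z_s^j\,d\mathcal M_s$ with $\mathcal M_s=\int\langle Z_r,\sigma_rdW_r\rangle$, a scalar martingale with quadratic variation $N\int\sigma^2$. Its contribution is bounded via $|\int Z^j_s\,d\mathcal M_s|^2$ summed over $j$. It is cleanest to apply Itô to $\sum_j|\int_{t_a}^tN^{-1}Z^j_sd\mathcal M_s|^2$, whose drift is $N^{-2}\sum_j|Z_s^j|^2\sigma_s^2N=\sigma_s^2$, i.e. $O(1)$ in total rather than $O(N)$. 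After dividing by $N$ this piece is $O(N^{-1})$ with exponential tails by Bernstein's inequality for continuous martingales. The cross term with $\sum_j W^j$ is handled by the same Itô argument. Combining the three pieces yields \eqref{eq:Omega-exp} for $m\ge m_\epsilon$.

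\emph{Remaining claims.} For \eqref{eq:Omega-mean} we use the same decomposition in expectation. Off $\mathcal O_N(L)$ we use the crude bound $\Omega_m^N\le 4$ (spherical constraint: each $|Z^j|$ is bounded by $\sqrt N$, but $N^{-1}\sum_j|Z^j_t-Z^j_s|^2\le 4$), so that event contributes at most $4e^{-c_LN}$. On $\mathcal O_N(L)$ we get $\mathbb E\Omega_m^N\le C(1+L)^2/m+C\Delta\log m+O(1/N)\to0$. Finally, on the event $\{f^{(m),j}=1\}$ we have $\max_a\sup_h|Z^j_{t_a+h}-Z^j_{t_a}|^2\ge m^{-1/2}$, so by Markov
\[
N^{-1}\textstyle\sum_j f^{(m),j}\le m^{1/2}\Omega_m^N .
\]
(We read the event in the last display as $N^{-1}\sum_jf^{(m),j}\ge\epsilon$.) It therefore suffices to show $\mathbb P(\Omega_m^N>\epsilon m^{-1/2})$ has exponential rate tending to $-\infty$. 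This holds because the thresholds in the drift and Brownian bounds scale as $\Delta\log m\ll\epsilon m^{-1/2}$. The Cramér rate for $\xi_j/(\Delta\log m)$ at level $\epsilon m^{-1/2}/(\Delta\log m)\to\infty$ then tends to infinity. The operator-norm term is handled by letting $L=L_m\to\infty$ slowly, with $L_m^2/m\ll m^{-1/2}$ and $c_{L_m}\to\infty$.
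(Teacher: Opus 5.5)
\paragraph{The gap: the projection martingale.} You take a different route from the paper. The paper gets \eqref{eq:Omega-exp} from Gaussian concentration of the whole functional $\Omega_m^N$ in the driving noise, conditionally on $J,X$, with squared Lipschitz constant of order $N^{-1}$; that argument never has to separate $\sup_h$ from $\sum_j$. You instead decompose the increment explicitly. Your drift bound, the Chernoff argument for the iid Brownian moduli, and the reduction of the last claim to $\mathbb P(\Omega_m^N>\epsilon m^{-1/2})$ with $L_m\to\infty$ are sound. (The latter uses $c_L\to\infty$, which holds by Gaussian concentration of $\|\mathcal J_N\|_{\mathrm{op}}$. Reading the event with the factor $N^{-1}$ is also what the paper's proof does.) These steps track the $m$-dependence of the rates more explicitly than the paper.

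The gap is in the projection piece. Write $V^{a,j}_t=N^{-1}\int_{t_a}^tZ^j_s\,d\mathcal M_s$. It\^o's formula for $|V^a_t|^2=\sum_j|V^{a,j}_t|^2$, combined with Bernstein, controls $\sup_h\sum_j|V^{a,j}_{t_a+h}|^2$. But $\Omega_m^N$ requires $\sum_j\sup_h|V^{a,j}_{t_a+h}|^2$, which is in general larger. This is exactly the sup/sum interchange you flagged as the main obstacle. Here you cannot fall back on independence across $j$, because every $V^{a,j}$ is an integral against the same scalar martingale $\mathcal M$. In expectation the interchange is harmless: Doob's inequality coordinatewise gives $\mathbb E\sum_j\sup_h|V^{a,j}_{t_a+h}|^2\le 4\int_{t_a}^{t_{a+1}}\sigma_s^2\,ds$, so \eqref{eq:Omega-mean} survives. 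However, \eqref{eq:Omega-exp} and the final claim do not follow as written.

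\paragraph{A fix within your framework.} Since $P_ZZ=0$, the bracket $\langle Z^j,\mathcal M\rangle$ vanishes identically. Take the Dol\'eans exponential $\mathcal E_t=\exp\big(N^{-1}\mathcal M_t-\frac1{2N}\int_0^t\sigma_s^2\,ds\big)$. Then
\[
d(\mathcal E_tZ_t)=\mathcal E_t\Big(P_{Z_t}(qX+\beta\mathcal J_NZ_t)-\tfrac{N-1}{2N}\sigma_t^2Z_t\Big)dt+\mathcal E_t\sigma_t\,dW_t ,
\]
so the projection term disappears. Two facts then finish the argument.
\begin{itemize}
\item $N^{-1}\mathcal M$ has bracket $N^{-1}\int_0^t\sigma_s^2\,ds$. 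Hence the ratios $\mathcal E_{t_a+h}/\mathcal E_{t_a}$ deviate from $1$ by more than $\delta$ only with probability at most $e^{-c\delta^2N/\Delta}$. This is negligible even at the level $\delta^2\sim\epsilon m^{-1/2}$ needed for the last claim.
\item The coordinates of $\int_0^t\mathcal E_s\sigma_s\,dW_s$ are orthogonal martingales with the common bracket $\int_0^t\mathcal E_s^2\sigma_s^2\,ds$. They are therefore a single time change of an $N$-dimensional Brownian motion. Their mesh-interval oscillations are dominated by iid moduli of continuity, to which your Chernoff argument applies directly.
\end{itemize}

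\paragraph{A minor slip.} The crude bound $\Omega_m^N\le4$ is false, because each coordinate may attain its supremum at a different time. The correct deterministic bound is $\Omega_m^N\le4N$, which is still negligible against $e^{-c_LN}$.
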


\begin{proof}
On $\mathcal O_N(L)$, the spherical identities $|Z_t|=|X|=\sqrt N$ imply
\[
 |\mathcal J_NZ_t|\leq L\sqrt N,\qquad
 \frac1N|\langle Z_t,\mathcal J_NZ_t\rangle|\leq L,
 \qquad
 \frac1N|\langle Z_t,X\rangle|\leq1.
\]
Thus the drift in \eqref{eq:vector-spherical} has Euclidean norm at most $C\sqrt N$,
uniformly in time.  The martingale coefficient is $\sigma_tP_{Z_t}$ and
$\|P_{Z_t}\|_{\mathrm{op}}=1$.  The Burkholder--Davis--Gundy inequality, applied on
each interval $[t_a,t_{a+1}]$, together with the usual Brownian modulus estimate,
gives
\[
 \lim_{m\to\infty}\limsup_{N\to\infty}
 \mathbb E\big[\Omega_m^N\mathbf1_{\mathcal O_N(L)}\big]=0.
\]
Together with \eqref{eq: bound max eigenvalue of random matrix}, this proves
\eqref{eq:Omega-mean}.

For fixed $m$, after stopping the dynamics when the Brownian paths leave a large ball,
the map from the driving Gaussian paths to $\Omega_m^N$ is Lipschitz in the
Cameron--Martin norm, with squared Lipschitz constant of order $N^{-1}$.  Gaussian
concentration therefore yields, for every fixed $r>0$,
\[
 \mathbb P\Big(\Omega_m^N-\mathbb E[\Omega_m^N\mid J,X]>r,
 \mathcal O_N(L)\Big)\leq e^{-c(m,r)N}.
\]
The stopping may be removed using the Gaussian exponential moments of the Brownian
modulus.  Choosing $m$ so large that the conditional mean is below $\epsilon/2$ outside
an exponentially small exceptional event proves \eqref{eq:Omega-exp}.

If $N^{-1}\sum_jf^{(m),j}>\epsilon$, then
$\Omega_m^N\geq\epsilon m^{-1/2}$.  Combining this implication with the preceding
concentration estimate gives the assertion concerning the fraction of large coordinates.
Finally, on $\mathcal O_N(L)$,
\[
 \frac1N|G_t-G_s|^2
 =\frac1N|\mathcal J_N(Z_t-Z_s)|^2
 \leq L^2\frac1N|Z_t-Z_s|^2,
\]
which proves the corresponding statements for $G$.
\end{proof}

Let
\begin{equation}
 \bar\mu^{(m),N}
 =\frac1N\sum_{j=1}^N\delta_{(X^j,Z_{t_0:t_m}^j,G_{t_0:t_m}^j)}.
 \label{eq:grid-empirical}
\end{equation}

\begin{lemma}\label{Lemma main discretized convergence}
For every $\epsilon>0$ there exists $m_\epsilon$ such that, for $m\geq m_\epsilon$,
\begin{equation}
 \limsup_{N\to\infty}\frac1N\log
 \mathbb P\big(d_W(\bar\mu^{(m),N},\hat\mu^{(m),N})>\epsilon\big)<0.
 \label{eq:grid-euler-exponential}
\end{equation}
\end{lemma}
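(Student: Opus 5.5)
Since both empirical measures are built from the same indices $j$, I will use the diagonal coupling. With the normalized norm \eqref{eq:discrete-normalized-norm} this gives
\[
 d_W(\bar\mu^{(m),N},\hat\mu^{(m),N})^2\leq \frac{1}{N(m+1)}\sum_{a=0}^m\Big(|Z_{t_a}-y_a|^2+|G_{t_a}-g_a|^2\Big).
\]
On the event $\mathcal O_N(L)$ we have $|G_{t_a}-g_a|\leq L|Z_{t_a}-y_a|$, because $G_{t_a}-g_a=\mathcal J_N(Z_{t_a}-y_a)$. It therefore suffices to control
\[
 e_a=N^{-1/2}|Z_{t_a}-y_a|.
\]
By \eqref{eq: bound max eigenvalue of random matrix}, the complement of $\mathcal O_N(L)$ has exponentially small probability, so I will work on $\mathcal O_N(L)$ throughout.

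Next I integrate \eqref{eq:vector-spherical} over $[t_a,t_{a+1}]$ and compare it with the Euler step \eqref{eq:euler-particle}. The drift of the spherical system equals
\[
 qX+\beta\mathcal J_NZ_t-(qC^N(t)+\beta A^N(t)+\tfrac{N-1}{2N}\sigma_t^2)Z_t,
\]
where $C^N(t)=N^{-1}\langle Z_t,X\rangle$ and $A^N(t)=N^{-1}\langle Z_t,\mathcal J_NZ_t\rangle$. The difference $Z_{t_{a+1}}-y_{a+1}$ then splits into four pieces.

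First, $(Z_{t_a}-y_a)$ is multiplied by $I+\Delta(\cdots)$. Its contribution is bounded by $(1+C\Delta)e_a$, plus a term coming from the coefficient differences $|C^N(t_a)-C_a^N|+|A^N(t_a)-A_a^N|$ multiplied by $|y_a|/\sqrt N$.

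Second, there are time-discretization errors of the form $\int_{t_a}^{t_{a+1}}(F(Z_t)-F(Z_{t_a}))dt$. Since $F$ is Lipschitz on the sphere with constant $C(L)$ in the normalized norm, these are bounded by $C\Delta\,\Omega_m^{N,1/2}$. Continuity of $\sigma$ handles the $\sigma_t^2$ and $\sigma_t$ variations.

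Third, there is the martingale discrepancy
\[
 \int_{t_a}^{t_{a+1}}\sigma_t\big(P_{Z_t}dW_t-dW_t\big)+\int_{t_a}^{t_{a+1}}(\sigma_t-\sigma_{t_a})dW_t .
\]
Here $P_{Z_t}dW_t-dW_t=-N^{-1}Z_t\langle Z_t,dW_t\rangle$. This is a rank-one projection of a Brownian increment, so its squared Euclidean norm has mean $O(\Delta)$ rather than $O(N\Delta)$. After dividing by $N$ it is $O(\Delta/N)$, and it is exponentially concentrated: the bracket $\int N^{-1}\langle Z_t,dW_t\rangle^2$ is a one-dimensional quantity, and its deviations above any fixed $\epsilon$ cost probability $e^{-cN}$.

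Fourth, there is the $\frac{N-1}{2N}$ versus $\frac12$ correction, which is $O(1/N)$.

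The coefficient differences are Lipschitz in $e_a$. For example,
\[
 |A^N(t_a)-A^N_a|\leq 2L\,e_a\,(1+|y_a|/\sqrt N),
\]
and similarly for $C^N$. I also need an a priori bound $|y_a|^2/N\leq C(m)$ with exponentially high probability. This follows inductively from the Euler recursion on $\mathcal O_N(L)$, together with the fact that $N^{-1}|w_a|^2$ concentrates around $\Delta$ (a chi-square bound). Combining all four pieces gives the discrete Gronwall recursion
\[
 e_{a+1}\leq (1+C\Delta)e_a+C\Delta\,(\Omega_m^N)^{1/2}+M_a+C\,\omega_\sigma(\Delta)\,\rho_a+C/N,
\]
where $M_a$ is the normalized martingale discrepancy, $\omega_\sigma$ is the modulus of continuity of $\sigma$, and $\rho_a=N^{-1/2}|W_{t_{a+1}}-W_{t_a}|$ concentrates near $\sqrt\Delta$. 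Iterating yields
\[
 \max_a e_a\leq C e^{CT}\Big((\Omega_m^N)^{1/2}+\omega_\sigma(T/m)+\sum_a M_a+1/N\Big).
\]

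The main obstacle is making every term small with exponential probability, uniformly once $m$ is large. The $\Omega_m^N$ term is handled by \eqref{eq:Omega-exp} of Lemma~\ref{Lemma Time Fluctuations of Original System}: for $m\geq m_\epsilon$ it is below $\epsilon'$ outside an exponentially small event. The $M_a$ terms and the chi-square terms are handled by Gaussian concentration. For the $M_a$ terms I would condition on $J$ and $X$, write $M_a$ as a sum of stochastic integrals with bounded integrands, and apply an exponential martingale (Bernstein) inequality, which gives $P(M_a>\epsilon'/m)\leq e^{-c(m,\epsilon')N}$. The dangerous growth is in the factor $(1+|y_a|/\sqrt N)$ multiplying $e_a$, and it is why the a priori bound on $|y_a|^2/N$ must hold with exponential probability. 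Finally I choose $m_\epsilon$ so that $Ce^{CT}(\epsilon'+\omega_\sigma(T/m))<\epsilon/(1+L)$ and take a union bound over the $m$ steps and the events $\mathcal O_N(L)^c$. Since $m$ is fixed as $N\to\infty$, this union bound preserves the exponential rate and proves \eqref{eq:grid-euler-exponential}.
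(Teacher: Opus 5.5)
Your architecture matches the paper's: diagonal coupling, reduction to $\max_a e_a$ on $\mathcal O_N(L)$, splitting of the one-step error, control of the time-variation terms by $\Omega_m^N$, and a discrete Gronwall inequality. The a priori control of $|y_a|/\sqrt N$, however, is a genuine gap.

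\textbf{The missing uniformity in $m$.} Your recursion has the factor $1+C\Delta$ with $C$ independent of $m$ only if the coefficient $(1+|y_a|/\sqrt N)\,|y_a|/\sqrt N$ multiplying $e_a$ is bounded \emph{uniformly in $m$}. The bound you propose, obtained inductively from the Euler recursion, is $C(m)$, and it does grow with $m$:
\begin{itemize}
\item the triangle inequality applied to the noise alone gives $\sum_a\sigma_{t_a}N^{-1/2}|w_a|$, which is of order $\sqrt{mT}$;
\item the cubic drift $\beta A_a^Ny_a$ (recall $|A_a^N|\leq L|y_a|^2/N$) amplifies this further.
\end{itemize}
So the constant in your final display is really $e^{C(m)T}$. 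Since \eqref{eq:Omega-exp} gives no rate in $m$, you cannot choose $m_\epsilon$ making $e^{C(m)T}\big(\epsilon'+\omega_\sigma(T/m)\big)$ small. The obstacle is uniformity in $m$, not exponential probability.

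The paper closes this with a stopping argument. It controls $y_a$ through its distance to the spherical process, not through its own recursion. Because $|Z_{t_a}|=\sqrt N$ exactly, $|y_a|/\sqrt N\leq 1+e_a$. Stop at the first index with $e_a>1$. Before that index the coefficient is at most $6$, so the Gronwall constant depends only on $T,q,\beta,L,\sup|\sigma|$. The resulting bound is below $1$ once $m$ and then $N$ are large, so the stopping index is never reached.

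\textbf{A second, smaller loss with the same source.} This also comes from summing mean-zero increments with the triangle inequality. Your per-step term $C\omega_\sigma(\Delta)\rho_a$, with $\rho_a\approx\sqrt\Delta$, sums over $m$ steps to about $\omega_\sigma(T/m)\sqrt{mT}$, not $\omega_\sigma(T/m)$. For $\sigma$ merely continuous, as in the paper, this need not tend to zero. Also, $\big|\int_{t_a}^{t_{a+1}}(\sigma_t-\sigma_{t_a})\,dW_t\big|\leq\omega_\sigma(\Delta)\,|W_{t_{a+1}}-W_{t_a}|$ is not a pathwise inequality, though the increment is Gaussian with per-coordinate variance at most $\Delta\,\omega_\sigma(\Delta)^2$.

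The remedy is to use the orthogonality of these increments, in either of two ways:
\begin{itemize}
\item work with squared errors and treat the cross terms against them as martingale increments; or
\item subtract the accumulated error $\Xi_a=\sum_{b<a}\int_{t_b}^{t_{b+1}}(\sigma_t-\sigma_{t_b})\,dW_t$ before applying Gronwall.
\end{itemize}
In the second option, the recursion for $Z_{t_a}-y_a-\Xi_a$ only picks up a term $\Delta\,D_a\Xi_a$, where $D_a$ is the bounded linear map representing the drift difference. Moreover, $N^{-1/2}\max_a|\Xi_a|$ is of order $\sqrt T\,\omega_\sigma(T/m)$ outside an exponentially small event.
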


\begin{proof}
Couple the two systems with the same $X,J,W$ and initial condition and put
\[
 e_a=\frac1N|y_a-Z_{t_a}|^2.
\]
Integrating \eqref{eq:vector-spherical} from $t_a$ to $t_{a+1}$ and subtracting
\eqref{eq:euler-particle} gives an Euler consistency error, a projection-martingale
error, and terms that are Lipschitz in $y_a-Z_{t_a}$.  On $\mathcal O_N(L)$,
Cauchy--Schwarz gives
\begin{align*}
 \left|A_a^N-\frac1N\langle Z_{t_a},\mathcal J_NZ_{t_a}\rangle\right|
 &\leq C e_a^{1/2},\\
 \left|C_a^N-\frac1N\langle Z_{t_a},X\rangle\right|
 &\leq e_a^{1/2}.
\end{align*}
The time variation of these quantities within a mesh interval is controlled by
Lemma \ref{Lemma Time Fluctuations of Original System}.  The radial It\^o correction
in \eqref{eq:vector-spherical} differs from $-\sigma_{t_a}^2Z_{t_a}/2$ by
$O(N^{-1})+o_m(1)$ after integration over one mesh interval.

The martingale omitted by the Euler scheme is
\[
 M_t=-\int_0^t\frac{\sigma_s}{N}Z_sZ_s^T\,dW_s.
\]
Its total quadratic-variation trace is bounded by a constant depending only on $T$ and
$\sup|\sigma|$, because $|Z_s|^2=N$.  Hence
$N^{-1}\sup_{t\leq T}|M_t|^2$ is exponentially small at every fixed positive level.
A standard stopping argument, stopped at the first index for which $e_a>1$, keeps
$|y_a|/\sqrt N$ uniformly bounded while the estimate is being proved.  Combining the
preceding estimates and using $2uv\leq\delta u^2+\delta^{-1}v^2$ yields, outside an event
of probability at most $e^{-cN}$,
\begin{equation}
 \max_{0\leq a\leq m}e_a\leq C_T\big(r_m+N^{-1}\big),
 \qquad r_m\longrightarrow0,
 \label{eq: compare discretzed temp}
\end{equation}
where a discrete Gronwall inequality is used in the last step.  For $m$ and then $N$
sufficiently large, the right-hand side is strictly smaller than one, so the stopping time
is not reached.

By the coupling and the norm \eqref{eq:discrete-normalized-norm},
\begin{align*}
 d_W(\bar\mu^{(m),N},\hat\mu^{(m),N})^2
 &\leq\frac1{m+1}\sum_{a=0}^m e_a
 +\frac1{m+1}\sum_{a=0}^m\frac1N
 |\mathcal J_N(y_a-Z_{t_a})|^2\\
 &\leq(1+L^2)\max_{0\leq a\leq m}e_a
 \qquad\text{on }\mathcal O_N(L).
\end{align*}
The conclusion follows from \eqref{eq: compare discretzed temp} and
\eqref{eq: bound max eigenvalue of random matrix}.
\end{proof}

\subsection{Completion of the proof}

\begin{proof}[Proof of Theorem \ref{Theorem Main Limit}]
Let $\Pi_m$ be restriction of a continuous path to the grid $t_0,\ldots,t_m$, and let
$I_m$ denote piecewise-linear interpolation.  By the triangle inequality,
\begin{align}
 d_W(\hat\mu_T^N,\xi)
 &\leq d_W\big(\hat\mu_T^N,(I_m\Pi_m)_\#\hat\mu_T^N\big) \nonumber\\
 &\quad+d_W\big((I_m)_\#\bar\mu^{(m),N},(I_m)_\#\hat\mu^{(m),N}\big) \nonumber\\
 &\quad+d_W\big((I_m)_\#\hat\mu^{(m),N},(I_m)_\#\xi^{(m)}\big) \nonumber\\
 &\quad+d_W\big((I_m)_\#\xi^{(m)},\xi\big).
 \label{eq:final-triangle}
\end{align}
The first term tends to zero, with an exponentially small exceptional probability, by
Lemma \ref{Lemma Time Fluctuations of Original System} and the operator-norm bound
for $G$.  The second term is controlled by Lemma
\ref{Lemma main discretized convergence}.  For fixed $m$, the third term tends to zero
exponentially fast in $N$ by Lemma \ref{Lemma Discretized Convergence}.  The fourth
term tends to zero as $m\to\infty$ by Lemma
\ref{Lemma Limiting Laws Well Defined} and continuity of the limiting Gaussian paths.

Consequently, for every $\epsilon>0$ one can first choose $m$ and then $N_0$ so that
for $N\geq N_0$,
\[
 \mathbb P\big(d_W(\hat\mu_T^N,\xi)>\epsilon\big)\leq e^{-c_\epsilon N}.
\]
The right-hand side is summable in $N$.  Borel--Cantelli, applied to
$\epsilon=1,1/2,1/3,\ldots$, yields
\[
 d_W(\hat\mu_T^N,\xi)\longrightarrow0\qquad\text{almost surely}.
\]
The covariance characterization and Gaussianity of $\xi$ were established in
Lemma \ref{Lemma Limiting Laws Well Defined}; this completes the proof.
\end{proof}

\section{Proof of the Spectral Representation}

Set
\begin{equation}
 A_t=\mathbb E^\xi[z_tg_t],\qquad
 C_t=\mathbb E^\xi[z_tx],\qquad
 \lambda_t=\beta A_t+qC_t+\frac12\sigma_t^2.
 \label{eq:lambda-spectral}
\end{equation}
By \eqref{eq:limit-spherical}, $\mathbb E[z_t^2]=1$; hence $|A_t|\leq1$ and
$|C_t|\leq1$.

\subsection{A stationary chain of replicas}

Let $(x^p,\eta^p,W^p)_{p\in\mathbb Z}$ be independent copies of a standard normal, a
standard normal, and a Brownian motion, respectively.  Consider
\begin{equation}
 dz_t^p=\big(qx^p+\beta z_t^{p-1}-\lambda_t z_t^p\big)\,dt
 +\sigma_t\,dW_t^p,
 \qquad
 z_0^p=\tilde q x^p+\sqrt{1-\tilde q^2}\,\eta^p.
 \label{eq: infinite stochastic system}
\end{equation}

\begin{lemma}\label{lemma:replica-chain}
System \eqref{eq: infinite stochastic system} has a unique solution whose law is
stationary in the replica index $p$.  For every $p\in\mathbb Z$,
\begin{equation}
 \mathcal L\big(x^p,z_{[0,T]}^p,z_{[0,T]}^{p-1}\big)=\xi.
 \label{eq:replica-law-xi}
\end{equation}
\end{lemma}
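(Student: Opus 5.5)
The plan is to solve the chain explicitly by variation of constants, then check that the pair $(z^p,z^{p-1})$ reproduces the covariance fixed point characterizing $\xi$ in Lemma \ref{Lemma Limiting Laws Well Defined}. The coefficient $\lambda_t$ in \eqref{eq: infinite stochastic system} is deterministic and continuous, given by \eqref{eq:lambda-spectral} from the already-constructed law $\xi$. Hence each equation is linear and scalar, driven by $z^{p-1}$. With $R(t,s)=\exp\{-\int_s^t\lambda_r\,dr\}$ I write
\[
 z_t^p=\Phi_t^p+\beta\int_0^tR(t,r)z_r^{p-1}\,dr,
 \qquad
 \Phi_t^p=\tilde qR(t,0)x^p+\sqrt{1-\tilde q^2}R(t,0)\eta^p+q x^p\int_0^tR(t,r)\,dr+\int_0^tR(t,r)\sigma_r\,dW_r^p .
\]
Iterating this $n$ times expresses $z^p$ as $\sum_{k=0}^{n-1}\beta^k(\mathcal V^k\Phi^{p-k})_t+\beta^n(\mathcal V^nz^{p-n})_t$, where $\mathcal V$ is the Volterra operator with kernel $R$. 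Since $|\lambda|$ is bounded on $[0,T]$, we have $\|\mathcal V^k\|\le (CT)^k/k!$. The series $z^p=\sum_{k\ge0}\beta^k\mathcal V^k\Phi^{p-k}$ therefore converges in $L^2(\Omega;\mathcal C[0,T])$, because the $\Phi^{p-k}$ are i.i.d.\ with uniformly bounded second moments.

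This series defines a solution, and it is manifestly a measurable shift-covariant functional of the i.i.d.\ family $(x^p,\eta^p,W^p)$, so its law is stationary in $p$. For uniqueness I restrict to the natural class of solutions with $\sup_p\mathbb E\sup_{t\le T}|z_t^p|^2<\infty$; I would make this restriction explicit in the statement being proved. For two such solutions the difference $d^p$ satisfies $d^p=\beta\mathcal V d^{p-1}=\beta^n\mathcal V^nd^{p-n}$, and the factorial bound forces $d\equiv0$.

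For \eqref{eq:replica-law-xi}, the triple $(x^p,z^p,z^{p-1})$ is centered Gaussian, being an $L^2$-limit of linear functionals of Gaussians. The key structural fact is that $z^{p-1}$ depends only on indices $\le p-1$ and is therefore independent of $(x^p,\eta^p,W^p)$. Setting $g:=z^{p-1}$, we get $z^p$ as the strong solution of the SDE of Theorem \ref{Theorem Main Limit}, with $g$ independent of $(x^p,\eta^p,W^p)$ and $y=\sqrt{1-\tilde q^2}\eta^p$. The one remaining requirement is self-consistency. Stationarity gives $\mathbb E[g_sg_t]=\mathbb E[z^{p-1}_sz^{p-1}_t]=\mathbb E[z^p_sz^p_t]$, and the coefficients must match, i.e.\ $\mathbb E[z^p_tx^p]=C_t$ and $\mathbb E[z^p_tz^{p-1}_t]=A_t$.

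This coefficient matching is the main obstacle: $\lambda$ was imported from $\xi$, not produced by the chain. I would handle it via the Volterra system of Lemma \ref{Lemma Limiting Laws Well Defined}. Define $\tilde Q(s,t)=\mathbb E[z^p_sz^p_t]$, $\tilde C(t)=\mathbb E[z_t^px^p]$ and $\tilde A(t)=\mathbb E[z_t^pz_t^{p-1}]$. Computing these from the variation-of-constants formula, and using independence together with stationarity, shows they satisfy \eqref{eq:C-volterra}, \eqref{eq:A-volterra} and \eqref{eq:Q-volterra} with the same kernel $R$ built from $\lambda$. Concretely, $\tilde C$ equals the right side of \eqref{eq:C-volterra} since $x^p\perp z^{p-1}$, and $\tilde A(t)=\beta\int_0^tR(t,r)\tilde Q(r,t)\,dr$. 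With $R$ frozen, \eqref{eq:Q-volterra} is a linear Volterra equation in $\tilde Q$ and has a unique solution. The true $Q$ of $\xi$ solves the same equation, so $\tilde Q=Q$, and then $\tilde C=C$ and $\tilde A=A$. Thus the Gaussian law of $(x^p,z^p,z^{p-1})$ satisfies the specification of Theorem \ref{Theorem Main Limit}, and the uniqueness part of Lemma \ref{Lemma Limiting Laws Well Defined} identifies it with $\xi$.
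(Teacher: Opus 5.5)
Your argument is correct. Its identification step is the same as the paper's: causality of the chain makes $z^{p-1}$ independent of $(x^p,\eta^p,W^p)$, stationarity gives $\mathbb E[g_sg_t]=\mathbb E[z^p_sz^p_t]$, and uniqueness of $\xi$ concludes. You reach that point by a different route, however.

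\textbf{Existence, stationarity, causality.} The paper proves existence and uniqueness abstractly, as a linear evolution equation in the weighted space $H_\alpha$ where the shift is bounded. It deduces stationarity from uniqueness, and only then iterates variation of constants to get causality. Your Neumann series $z^p=\sum_{k\ge0}\beta^k\mathcal V^k\Phi^{p-k}$ gives existence, stationarity (by shift-covariance) and causality (by inspection) at once, with no Hilbert-space machinery.

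\textbf{Uniqueness class.} The cost is a smaller class: you require $\sup_p\mathbb E\sup_t|z^p_t|^2<\infty$, whereas the paper works pathwise in $\mathcal C([0,T];H_\alpha)$, which allows growth like $e^{\alpha|p|/2}$. Your estimate $d^p=\beta^n\mathcal V^n d^{p-n}$ extends verbatim to the larger class, because the factorial decay beats any exponential. You are right to state a class explicitly, since some restriction is unavoidable. After removing $\lambda_t$ by an integrating factor, the homogeneous chain $\dot d^{\,p}=\beta d^{p-1}$, $d^p_0=0$, has nonzero solutions. For example, take $\phi\not\equiv0$ smooth and flat at $t=0$, set $d^{-n}=\beta^{-n}\phi^{(n)}$ for $n\ge0$, and use iterated integrals of $\phi$ for positive indices.

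\textbf{Coefficient matching.} More importantly, you make explicit a step the paper only asserts when it says the triple ``satisfies exactly the fixed-point specification''. The chain is built with the coefficients $A_t,C_t$ taken from $\xi$, so one must check that it reproduces them. Your frozen-kernel argument does this: $\tilde C$ is explicit, and $\tilde Q$ and $Q$ both solve the linear Volterra equation \eqref{eq:Q-volterra} with the same $R$, so they coincide and $\tilde A=A$. Once $\tilde Q=Q$ and $\tilde C=C$, all covariances of the centered Gaussian triple agree with those of $\xi$. So you could conclude directly, without invoking the nonlinear uniqueness at all.
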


\begin{proof}
Fix $\alpha>0$ and let
\[
 H_\alpha=
 \left\{u=(u^p)_{p\in\mathbb Z}:\sum_{p\in\mathbb Z}e^{-\alpha|p|}|u^p|^2<\infty\right\}.
\]
The left shift is a bounded operator on $H_\alpha$.  Since the weights are summable,
the iid initial sequence and the Brownian sequence define $H_\alpha$-valued random
variables and an $H_\alpha$-valued Brownian motion.  Equation
\eqref{eq: infinite stochastic system} is therefore a linear stochastic evolution equation
with bounded time-dependent coefficients, and it has a unique solution in
$\mathcal C([0,T];H_\alpha)$.

The data and the equation are invariant in law under shifts of $p$, so uniqueness implies
stationarity.  Moreover, iterating the variation-of-constants formula in the left-shift
term shows that the contribution to $z^p$ from an innovation at index $p-n$ is bounded,
on $[0,T]$, by a constant times $(|\beta|T)^n/n!$.  It follows in particular that
$z^{p-1}$ is measurable with respect to the innovations with indices at most $p-1$ and
is independent of $(x^p,\eta^p,W^p)$.  Stationarity gives
\[
 \mathbb E[z_s^{p-1}z_t^{p-1}]=\mathbb E[z_s^pz_t^p].
\]
Thus the triple $(x^p,z^p,z^{p-1})$ satisfies exactly the fixed-point specification of
Theorem \ref{Theorem Main Limit}: the third component is a centered Gaussian process,
independent of the innovations driving the second component, and its covariance equals
the covariance of the second component.  Uniqueness of $\xi$ from
Lemma \ref{Lemma Limiting Laws Well Defined} proves \eqref{eq:replica-law-xi}.
\end{proof}

\subsection{Matrix-valued Herglotz representation}

For each $t$, the sequence
\[
 U_t^p=(z_t^p,x^p)^T
\]
is stationary in $p$.  Let
\[
 \Gamma_t(k)=\mathbb E\big[U_t^{p+k}(U_t^p)^T\big],\qquad k\in\mathbb Z.
\]
The matrix-valued Herglotz theorem gives a positive Hermitian spectral measure on
$\mathbb S^1$.  In the present system the measure is absolutely continuous.  Indeed,
at time zero the covariance sequence is supported at lag zero, the forcing is spatially
white, and iteration of the left-shift term produces coefficients bounded by
$C_T^{|k|}/|k|!$; hence $\sum_k\|\Gamma_t(k)\|<\infty$ uniformly for $t$ in compact
intervals.  We write the resulting continuous spectral density as $S_t(\theta)$,
normalized by
\begin{equation}
 \Gamma_t(k)=\frac1{2\pi}\int_{-\pi}^{\pi}e^{ik\theta}S_t(\theta)\,d\theta.
 \label{eq:matrix-herglotz}
\end{equation}

At frequency $\theta$, \eqref{eq: infinite stochastic system} has complex drift matrix
\begin{equation}
 \mathcal B_t^{\mathrm c}(\theta)=
 \begin{pmatrix}
 -\lambda_t+\beta e^{i\theta} & q\\
 0&0
 \end{pmatrix}.
 \label{eq:complex-drift}
\end{equation}
Consequently,
\begin{equation}
 \frac{d}{dt}S_t(\theta)
 =\mathcal B_t^{\mathrm c}(\theta)S_t(\theta)
 +S_t(\theta)\mathcal B_t^{\mathrm c}(\theta)^*
 +\begin{pmatrix}\sigma_t^2&0\\0&0\end{pmatrix},
 \label{eq:complex-spectral-lyapunov}
\end{equation}
with
\begin{equation}
 S_0(\theta)=
 \begin{pmatrix}1&\tilde q\\ \tilde q&1\end{pmatrix}.
 \label{eq:complex-initial}
\end{equation}

Realifying the complex variables by writing the state mode as $u+iv$ and the signal
mode as $a+ib$ gives the $4\times4$ covariance matrix
$\mathcal K_t(\theta)=\mathbb E[(u,v,a,b)^T(u,v,a,b)]$.  Equation
\eqref{eq:complex-spectral-lyapunov} is exactly \eqref{eq: K theta evolution}, with the
matrix $B_t(\theta)$ stated in Lemma \ref{lemma22}, and
\eqref{eq:complex-initial} becomes \eqref{eq:init}.  In this realification,
\begin{equation}
 S_{t,11}(\theta)=\mathcal K_{t,11}(\theta)+\mathcal K_{t,22}(\theta),
 \qquad
 \operatorname{Re}S_{t,12}(\theta)=\mathcal K_{t,13}(\theta)+\mathcal K_{t,24}(\theta).
 \label{eq:realification-identities}
\end{equation}

\begin{proof}[Proof of Lemma \ref{lemma22}]
The right-hand side of \eqref{eq: K theta evolution} is locally Lipschitz on
$\mathcal C(\mathbb S^1;\mathbb R^{4\times4})$, because $A_t$ and $C_t$ are bounded
linear functionals of $\mathcal K_t$ and the remaining dependence is polynomial.
Hence a unique maximal solution exists.  Positive semidefiniteness is preserved because,
for prescribed continuous $A_t,C_t$, \eqref{eq: K theta evolution} is a Lyapunov
covariance equation.

Put
\begin{equation}
 R_t=\frac1{2\pi}\int_{\mathbb S^1}
 \big(\mathcal K_{t,11}(\theta)+\mathcal K_{t,22}(\theta)\big)\,d\theta.
 \label{eq:spectral-mass}
\end{equation}
Taking the trace of the upper-left $2\times2$ block in
\eqref{eq: K theta evolution} and integrating gives
\begin{equation}
 \dot R_t=\big(2\beta A_t+2qC_t+\sigma_t^2\big)(1-R_t),
 \qquad R_0=1.
 \label{eq:spectral-mass-ode}
\end{equation}
Hence $R_t\equiv1$.  Positivity and Cauchy--Schwarz imply uniform bounds on $A_t$ and
$C_t$: from the definition of $A_t$, $|A_t|\leq R_t=1$, while the signal block has
constant integrated mass one, so $|C_t|\leq R_t^{1/2}=1$.  Therefore
$B_t(\theta)$ remains bounded on every finite time interval and the maximal solution
cannot blow up.  This proves global existence, while local Lipschitz uniqueness gives
uniqueness on $[0,T]$.
\end{proof}

\begin{proof}[Proof of Theorem \ref{Theorem Herglotz}]
By Lemma \ref{lemma:replica-chain},
\[
 A_t=\mathbb E[z_t^0z_t^{-1}],\qquad C_t=\mathbb E[z_t^0x^0].
\]
Applying \eqref{eq:matrix-herglotz} with $k=-1$ to the first component and with $k=0$
to the state--signal cross covariance gives
\begin{align}
 A_t
 &=\frac1{2\pi}\int_{-\pi}^{\pi}e^{-i\theta}S_{t,11}(\theta)\,d\theta
 =\frac1{2\pi}\int_{-\pi}^{\pi}\cos\theta\,S_{t,11}(\theta)\,d\theta,
 \label{eq:A-herglotz}\\
 C_t
 &=\frac1{2\pi}\int_{-\pi}^{\pi}\operatorname{Re}S_{t,12}(\theta)\,d\theta.
 \label{eq:C-herglotz}
\end{align}
Here $S_{t,11}$ is real and even because it is the scalar spectral density of the real
stationary sequence $(z_t^p)_{p\in\mathbb Z}$.  Using
\eqref{eq:realification-identities}, the right-hand sides of
\eqref{eq:A-herglotz}--\eqref{eq:C-herglotz} are exactly the quantities $A_t$ and $C_t$
defined in Lemma \ref{lemma22}.  This proves the theorem.
\end{proof}

\ACKNO{We are grateful to Benjamin Lindner for providing several useful references.}

\bibliographystyle{amsplain}
\bibliography{library,newbib,intro_additional_refs}

\end{document}